\newtheorem{thm}{Theorem}[section]
\newtheorem{claim}[thm]{Claim}
\theoremstyle{definition}
\newtheorem{de}[thm]{Definition}
\theoremstyle{remark}
\newtheorem{rem}[thm]{Remark}
\numberwithin{equation}{section}
\def\M{\mathcal M}
\def\lam_2{\lambda_2}
\begin{document}
\title[Convergence to equilibrium and Talagrand-type inequalities]{Convergence to global equilibrium for Fokker-Planck
equations on a graph and Talagrand-type inequalities}

\author[R. Che]{Rui Che}
\address{R. Che: Wu Wen-Tsun Key Laboratory of Mathematics, USTC, Chinese Academy of Sciences, Hefei Anhui 230026, PRC}
\email{boboke-1126@163.com}
\author[W. Huang]{Wen Huang}
\address{W. Huang: Wu Wen-Tsun Key Laboratory of Mathematics, USTC, Chinese Academy of Sciences, Hefei Anhui 230026, PRC}
\email{wenh@mail.ustc.edu.cn}

\author[Y. Li]{Yao Li}
\address{Y. Li: Courant Institute of Mathematical Sciences, New York University, New York, NY 10012, USA} \email{yaoli@cims.nyu.edu}
\author[P. Tetali]{Prasad Tetali}
\address{P. Tetali: School of Mathematics, Georgia Institute
of Technology, Atlanta, GA 30332, USA} \email{tetali@math.gatech.edu}

\subjclass[2000]{Primary: 37H10,60J27,60J60.}

\keywords{Fokker-Planck equation,  Gibbs density, graph, Talagrand inequality}

\thanks{The second author is supported by NSFC (No 11225105).  The work of the last author is supported in part by NSF DMS 1101447.}

\begin{abstract}
In recent work, Chow, Huang, Li and Zhou \cite{CHLZ} introduced the
study of  Fokker-Planck equations for a free energy function defined
on a finite graph.  When $N\ge 2$ is the number of vertices of the
graph, they show that  the corresponding Fokker-Planck equation is a
system of $N$ nonlinear ordinary differential equations defined on a
Riemannian manifold of probability distributions.  The different
choices for inner products on the space of probability distributions
result in different Fokker-Planck equations for the same process.
Each of these Fokker-Planck equations has a unique global
equilibrium, which is a Gibbs distribution. In this paper we study
the {\em speed of convergence} towards global equilibrium for the
solution of  these Fokker-Planck equations on a graph, and prove
that the convergence is indeed exponential. The rate as measured by
the decay of the $L_2$ norm can be bound in terms of  the spectral
gap of the Laplacian of the graph, and as measured by the decay of
(relative) entropy be bound using the modified logarithmic Sobolev
constant of the graph.

With the convergence result, we also prove two Talagrand-type
inequalities relating relative entropy and Wasserstein metric, based
on two different metrics introduced in \cite{CHLZ}. The first one is
a local inequality, while the second is a global inequality with
respect to the ``lower bound metric" from \cite{CHLZ}.
\end{abstract}

\maketitle

\section{Introduction}

As the stochastic differential equation becomes one of the
primary and  highly effective tools in many practical problems arising in diverse fields
such as finance, physics, chemistry and
biology \cite{Gar,Ris,Sch},  there are considerable efforts in understanding
the properties of the classical Fokker-Planck equation that describes
the time evolution of the probability distribution of a stochastic
process. At the same time, the free energy functional,  which  is defined on the
space of probability distributions, as a linear combination of  terms involving a
potential and an entropy, has widely been used in various subjects; it typically means different things in different
contexts. For example,  the notion of ``free energy" in thermodynamics is related to
the maximal amount of work that can be extracted from a system. The
concept of free energy is also used in other fields, such as
statistical mechanics, probability (particularly in the context of Markov Random Fields),
biology, chemistry, and  image processing; see e.g., \cite{HT,SW,WHY}.

Since the seminal work of Jordan, Kinderlehrer and Otto \cite{JKO,
O}, it is well known that a Fokker-Planck equation is the gradient
flow of the free energy functional on a Riemannian manifold that is
defined by a space of probability distributions with a 2-Wasserstein
metric on it. This discovery has been the starting point for many
developments  relating the free energy, Fokker-Planck equation, an abstract
notion of a Ricci curvature and optimal transport theory in  continuous
spaces. We refer to the  monographs \cite{AGS, V, V2} for an
overview. Recently, a synthetic theory of Ricci curvature in length
spaces has been developed by Lott-Sturm-Villani \cite{LV, SK, SK2},
which reveals the fundamental relationship between entropy and Ricci
curvature. Despite the remarkable developments on this subject on a
continuous space, much less is known when the underlying space is
discrete, as in an (undirected) graph.


In recent work,  Chow, Huang, Li and Zhou  \cite{CHLZ} considered
Fokker-planck equations for a free energy function (or a certain Markov process)
defined on a finite graph. For a graph on $N\ge 2$ vertices,
they showed that the corresponding Fokker-Planck equation is a system of $N$ nonlinear
ordinary differential equations, defined on a Riemannian manifold of
probability distributions.  In fact, they point out that one could make different choices for inner products on the space of probability distributions resulting, in turn, in different Fokker-Planck equations for the same process. Furthermore, each of these systems of ordinary differential
equations has a unique global equilibrium -- a Gibbs distribution --  and is
a gradient flow for the free energy functional defined on a
Riemannian manifold whose  metric is closely related to certain classical
Wasserstein metrics.

We recall here, more formally, the  approach of Chow et al \cite{CHLZ}.
Consider a  graph $G = (V,E)$, where $V=\{a_{1},\cdots,a_{N}\}$ is
the set of vertices $|V|\ge 2$, and $E$ denotes the set of (undirected) edges.
For simplicity, assume that the graph is connected and is simple -- with no self-loops or multiple edges. Let $N(i) := \{j\in \{1,2,\cdots,N\}|\{a_{i},a_{j}\}\in E \}$ denote
the neighborhood of a vertex $a_{i}\in V$.

Let ${\bm \Psi}=(\Psi_{i})_{i=1}^{N}$ be a given {\em potential}
function on $V$, where $\Psi_i$ is the potential on vertex $a_i$.
Further denote
$$\mathcal{M}=\{ {\bm \rho} =
(\rho_{i})_{i=1}^{N}\in \mathbb{R}^N|\sum_{i=1}^N\rho_i=1\text{ and
}\rho_i>0 \text{ for }i=1,2,\cdots,N\},
$$ as the space of all positive probability distributions on $V$.

Then the free energy functional has the following expression: for each ${\bm \rho}\in \M$, let
\begin{equation}\label{eq-1-1}
   F({\bm \rho}) := F_{{\bm \Psi},\beta}({\bm \rho})= \sum_{i=1}^N \Psi_{i}\rho_{i}+\beta\sum_{i=1}^N \rho_{i}\log\rho_{i},
\end{equation}
where $\beta> 0$ is the strength of ``white noise'' or the temperature. The free energy functional has a global minimizer, called a Gibbs density, and is
given by
\begin{equation}\label{eq-1-3}
   {\rho}^{*}_{i} = \frac{1}{K}e^{-\Psi_{i}/\beta} , \qquad \text{ where }K = \sum_{i=1}^N e^{-\Psi_{i}/\beta}.
\end{equation}

From a free energy viewpoint, Chow et al \cite{CHLZ} endowed  the  space $\mathcal{M}$ with a
Riemannian metric $d_{\Psi}$, which depended on the potential $\Psi$
as well as the structure of the graph.
Then by considering the gradient flow of the free energy
\eqref{eq-1-1} on such a Riemannian manifold $(\mathcal{M},d_{\Psi})$,
they obtained a Fokker-Planck equation on $\mathcal{M}$:
\begin{align}\label{dfpe}
\begin{aligned}
 \frac{d \rho_{i}} {dt}=&\sum_{j\in N(i),\Psi_{j}>\Psi_{i}}
 ((\Psi_{j}+\beta\log\rho_{j})-(\Psi_{i}+\beta\log\rho_{i}))\rho_{j}\\
 &+\sum_{j\in N(i),\Psi_{j}<\Psi_{i}} ((\Psi_{j}+\beta\log\rho_{j})-(\Psi_{i}+\beta\log\rho_{i}))\rho_{i}  \\
&+\sum_{j\in N(i), \Psi_{j}=\Psi_{i}}\beta(\rho_{j}-\rho_{i})
\end{aligned}
\end{align}
for $i=1,2\cdots,N$(see Theorem 2 in \cite{CHLZ}).

From a stochastic process viewpoint, the work of Chow et al may be seen as a new interpretation of white noise perturbations to a Markov process on $V$.  By considering the time evolution
equation of its probability density function, they obtained another
Fokker-Planck equation on $\mathcal{M}$:
\begin{align}\label{dfpe2}
\begin{aligned}
 \frac{d \rho_{i}} {dt}=&\sum_{j\in N(i),\bar{\Psi}_{j}>\bar{\Psi}_{i}} ((\Psi_{j}+\beta\log\rho_{j})-(\Psi_{i}+\beta\log\rho_{i}))\rho_{j}\\
   &+\sum_{j\in N(i),\bar{\Psi}_{j}<\bar{\Psi}_{i}}
 ((\Psi_{j}+\beta\log\rho_{j})-(\Psi_{i}+\beta\log\rho_{i}))\rho_{i} \\
= &\sum_{j\in N(i)} ((\Psi_{j}+\beta\log\rho_{j})-(\Psi_{i}+\beta\log\rho_{i}))^+\rho_{j}\\
   &- \sum_{j\in N(i)} ((\Psi_{j}+\beta\log\rho_{j})-(\Psi_{i}+\beta\log\rho_{i}))^-\rho_{i}
\end{aligned}
\end{align}
for $i=1,2,\cdots,N$, where $\bar{\Psi}_i=\Psi_i+\beta\log \rho_i$
for $i=1,2,\cdots,N$ (see Theorem 3 in \cite{CHLZ}). For
convenience, we call equations (\ref{dfpe}) and (\ref{dfpe2})
Fokker-Planck equation I \eqref{dfpe} and II \eqref{dfpe2}
respectively. Both  \eqref{dfpe} and \eqref{dfpe2} share similar
properties for $\beta>0$ (see Theorem 2 and Theorem 3 in
\cite{CHLZ}):

\begin{enumerate}
\item Both equations are gradient flows of the same free
  energy on the same probability space $\mathcal{M}$, but with different metrics.

\item The Gibbs distribution ${\bm \rho}^{*}=(\rho_i^*)_{i=1}^N$,
given by \eqref{eq-1-3}, is the unique stationary distribution of
both  equations in $\mathcal{M}$. Furthermore, the free energy $F$
attains its global minimum at the Gibbs distribution.

\item  For both equations, given any initial condition ${\bm \rho}^0\in \mathcal{M}$,
there exists a unique solution
$${\bm \rho}(t):[0,\infty)\rightarrow \mathcal{M}$$
with the initial value ${\bm \rho}^0\in \mathcal{M}$, and ${\bm
\rho}(t)$ satisfying the properties:
    \begin{enumerate}
    \item the free energy $F({\bm \rho}(t))$ decreases as  time $t$ increases, and

     \item ${\bm \rho}(t)\rightarrow {\bm \rho}^*$ under the Euclidean metric
       of $\mathbb{R}^N$, as $t\rightarrow +\infty$.
    \end{enumerate}
\end{enumerate}

\medskip

There are differences between equations \eqref{dfpe} and
\eqref{dfpe2}. Fokker-Planck equation I \eqref{dfpe} is obtained
from the gradient flow of the free energy $F$ on the Riemannian
metric space $(\mathcal{M},d_{\bm \Psi})$. However, its connection
to a Markov process on the graph is not clear. On the other hand,
Fokker-Planck equation II \eqref{dfpe2} is obtained from a Markov
process subject to ``white noise'' perturbations. This equation can
also be considered  as  a generalized gradient flow of the free energy on
another metric space $(\mathcal{M},d_{\bar{\bm \Psi}})$ (see Theorem
3 in \cite{CHLZ}). However, the geometry of
$(\mathcal{M},d_{\bar{\bm \Psi}})$ is not smooth. In fact, Chow et al
showed that, in this case, $\mathcal{M}$ is divided into finite
segments, and $d_{\bar{\bm \Psi}}$ is only smooth on each segment.

By the above discussion, we know that the Gibbs distribution ${\bm
\rho}^{*}=(\rho_i^*)_{i=1}^N$ is the unique global equilibrium of
both  equations \eqref{dfpe} and \eqref{dfpe2} in $\mathcal{M}$, and
for any solution ${\bm \rho}(t)$ of both  equations \eqref{dfpe} and
\eqref{dfpe2}, ${\bm \rho}(t)$ will converge to  global equilibrium
${\bm \rho}^{*}$, under the Euclidean metric of $\mathbb{R}^N$,  as
$t\rightarrow \infty$. A natural next question this raises is then
that of the derivation of estimates, such as $O(e^{-ct})$ for a
suitable $c>0$, on the rate of convergence to global equilibrium  for
solutions of  both  equations \eqref{dfpe} and
\eqref{dfpe2}. Answering such a question is one of the main objectives
of the present paper. The rates of convergence towards global equilibrium for
the solution of  these Fokker-Planck equations on a graph are investigated. We
will prove that the convergence is indeed exponential.

In \cite{CHLZ}, the authors introduced several metrics on the space
of probability measures $\mathcal{M}$, including  $d_{\Psi}$,
$d_{\bar{\Psi}}$, an upper bound metric $d_{M}$ and a lower bound
metric $d_{m}$, where the latter two are  independent of the choice
of  potential. These distances are obtained in a sense by
discretizing Felix Otto's calculus -- there is a  certain similarity
between these distances and the 2-Wasserstein distance on the space
of probability measures on $\mathbb{R}^{n}$. For example, the
gradient flow of free energy functional (defined using relative
entropy) in these metric spaces gives rise to the discrete
Fokker-Planck equation in \cite{CHLZ}.
It is worth mentioning that the geodesic of
$d_{\Psi}$ is a discretization of the geodesic equation in
2-Wasserstein distance on the space of probability measures on
$\mathbb{R}^{n}$. For these reasons, sometimes we refer to these as discrete
2-Wasserstein distances.

As an important application of our convergence result, Talagrand-type
inequalities are proved. We will show that  the 2-Wasserstein distance is
bounded from above by the relative entropy: that for all ${\bm \nu}$
absolutely continuous with respect to ${\bm \mu}$, it holds:
\begin{displaymath}
   d_{m}^{2}({\bm \nu},{\bm \mu}) \leq K H({\bm \nu}|{\bm \mu})\,,
\end{displaymath}
where $K$ only depends only on the (reference measure) ${\bm \mu}$.


In recent years, there has been considerable interest in deriving such inequalities in various spaces, with the purpose of studying geometric inequalities connected to concentration of measure and other phenomenon.
On a Riemannian manifold, Otto and Villani \cite{OV} showed (inter alia) that a logarithmic Sobolev inequality implied a Talagrand inequality; this work was soon generalized and simplified by Bobkov, Gentil and Ledoux \cite{BGL}; the latter provided simpler proofs of several previously known results concerning log-Sobolev and transport inequalities. See also \cite{BG} for an earlier work which (along with \cite{OV}) inspired much of the research in this topic.
In subsequent work, Lott and Villani \cite{LV2} used the Hamilton-Jacobi semigroup approach of Bobkov et al \cite{BGL} in showing  that a Talagrand inequality on a measured length space implied a global Poincar\'e inequality, as well as in obtaining (conversely), that spaces satisfying  a certain doubling condition, a local Poincar\'e inequality and a log-Sobolev inequality satisfied a Talagrand inequality.

In discrete spaces, such inequalities are less understood. In part,
the lack of a suitable 2-Wasserstein ($W_2$) distance between
probability measures on a graph has slowed this progress.  M. Sammer
and the last author of this work observed (see \cite{Sammer,ST} for a
proof) that a derivation of Otto-Villani goes through in the context
of a finite graph in yielding the implication that a (weaker) {\em
  modified} log-Sobolev inequality implies a (weaker) Talagrand-type
inequality, relating a $W_1$-distance (rather than a $W_2$) and the
relative entropy.

In the following, we obtain in fact two versions of a Talagrand-type inequality. The
first one is only locally true, which means that the parameter $K$
also depends on the range of ${\bm \nu}$ -- it is
true for all measures ${\bm \nu}$ in a compact neighborhood of ${\bm \mu}$, but
may not be true if ${\bm \nu}$ is arbitrarily far away from ${\bm \mu}$.

The ``global'' Talagrand inequality holds for the ``lower bound''
metric. If the graph $G$ is simple
and connected, then there exists a parameter $K$
that only depends on $\mu$ and certain parameters of $G$. Establishing such an inequality for a suitable notion of a $W_2$-distance on a discrete space continues to be intriguing; particularly, since various people have independently observed that a literal translation of such
an inequality, borrowed from the continuous case, need not hold even
on a 2-point discrete space (see e.g., \cite{M, GRZ}).
However, our results suggest that the metrics introduced in \cite{CHLZ} have a further similarity with the $W_2$-distance on the space of probability measures of the
length space. It is known that on $\mathbb{R}^{n}$, Talagrand
inequality is implied by the log-Sobolev inequality.  It
remains to be seen, however, whether such an implication is true in the present
case.

Independently, a related class of metrics has been
studied by Mielke in \cite{Mie} and Maas in \cite{M}, which are similar to the
Riemannian metrics in \cite{CHLZ} with a constant potential. In the
setting of both \cite{Mie} and \cite{M}, the graphs are assumed to be
associated with an irreducible and reversible Markov kernel. After
essentially finishing this paper, the authors have been informed that
functional inequalities including modified Talagrand inequality and
modified logarithmic Sobolev inequalities associated with the
Riemannian metric studied in \cite{M,  Mie} are independently
investigated in \cite{M2}.


\section{Preliminaries}
In this section, we recall some definitions in graph theory. A {\it graph} is an ordered pair $G =
(V, E)$ where $V=\{a_{1},\cdots,a_{N}\}$ is
the set of vertices and $E$ is the set of
edges. We further assume that the graph $G$ is a simple graph (that is, there are no self loops
or multiple edges) with $|V|\ge 2$, and $G$ is connected. A {\it weighted graph} $(G, w)$
is a pair consisting of a
graph $G=(V,E)$ and a positive real-valued function $w$ of its edges. The function $w$ is
most conveniently described as an $|V|$-by-$|V|$, symmetric, nonnegative matrix
$w= (w_{ij})$ with the property that $w_{ij}>0$ if and only if $(a_i, a_j)\in E$.

Given a graph $G = (V,E)$ with $V=\{ a_1,a_2,\cdots,a_N\}$, we
consider all {\it positive probability distributions} on  $V$:
\[ \mathcal{M} = \left\{ {\bm \rho}=(\rho_{i})^{N}_{i=1}\in \mathbb{R}^N | \sum_{i=1}^N \rho_{i} =
  1 \text{ and } \rho_{i} > 0 \text{ for }i\in \{1,2,\cdots,N\}
\right\}. \]

For ${\bm \mu}=(\mu_{i})^{N}_{i=1}\in \mathcal{M}$ and any map $f: V
\rightarrow \mathbb{R}$, recall the {\it $L^{2}({\bm \mu})$-norm } of $f$ with
respect to $\mu$, denoted by $||f||_{2,{\bm \mu}}$, and given by:
\begin{displaymath}
   ||f||_{2,{\bm \mu}}^{2} := \sum_{i=1}^N(f(a_i))^{2}\mu_{i}\,
\end{displaymath}

Let ${\bm \nu}=(\nu_{i})^{N}_{i=1}\in \mathcal{M}$, then the
{\it relative entropy} $H({\bm \nu}|{\bm \mu})$ of ${\bm \nu}$ with respect to ${\bm \mu}$ is defined by:
\begin{displaymath}
   H({\bm \nu}|{\bm \mu})=\sum_{i=1}^{N}\nu_{i}\log\frac{\nu_{i}}{\mu_{i}},
\end{displaymath}
and we measure the
distance between (the density of) ${\bm \nu}$ and ${\bm \mu}$ using:
\begin{displaymath}
   ||\frac{{\bm \nu}}{{\bm \mu}}-1||^{2}_{2,{\bm \mu}}:= \sum_{i=1}^{N}(\frac{\nu_{i}}{\mu_{i}}-1)^{2}\mu_{i}\,.
\end{displaymath}

Given a graph $G = (V,E)$, its {\it Laplacian matrix} is defined as:
\begin{displaymath}
   \mathcal{L}(G) := D - A\,,
\end{displaymath}
where $D$ is a diagonal matrix with $d_{ii} = \mathrm{deg}(a_{i})$
(number of edges at $a_{i}$), and $A$ is the {\it adjacency matrix} ($A_{ij}
= 1$ if and only if $\{a_{i},a_{j}\} \in E$). As $G$ is a connected simple graph, it is well known that
$\mathcal{L}(G)$ has one $0$ eigenvalue and $|V|-1$ positive eigenvalues.

Given a weighted graph $(G, w)$, its {\it weighted Laplacian
  matrix} is defined as
\begin{displaymath}
   \mathcal{L}(G,w)=diag(\delta_1, \delta_2,\cdots, \delta_{|V|}) - w
\end{displaymath}
with $\delta_i$ denoting the $i$th row sum of $w$. It is well known that
$\mathcal{L}(G, w)$ also has one $0$ eigenvalue and $|V|-1$ positive eigenvalues.

The second smallest eigenvalue $\lambda_2$ of $\mathcal{L}(G)$ (resp. $\mathcal{L}(G,
w)$) is called the {\it spectral gap} of $G$ (resp.$(G,w)$). We remind readers that
there are various standard ways to bound the spectral gap of a
graph. For example for the spectral gap $\lambda_2$ of
$\mathcal{L}(G)$, see \cite{BZ} for the bound
\begin{displaymath}
   \lambda_{2} \geq d_{max} - \sqrt{d_{max}^{2} - d_{min}^{2}}\,,
\end{displaymath}
where $d_{max}$ and $d_{min}$ are the maximum and minimum degrees of
vertices in $G$; similarly  see \cite{LZT}, for

\begin{displaymath}
   \lambda_{2} \geq \frac{2N}{2 + N(N-1)d - 2Md}\,,
\end{displaymath}
where $N$ is the number of vertices, $M$ is the number of edges, and $d$ is the
diameter of $G$; or   \cite{TP} for the bound,

\begin{displaymath}
   \lambda_{2}\geq 2(1-\cos(\frac{\pi}{N}))\,.
\end{displaymath}

\section{The Trend towards Equilibrium}\label{fpe3}

The rate of convergence towards global equilibrium for the solution of  Fokker-Planck equations
\eqref{dfpe} and \eqref{dfpe2} in weighted $L^{2}$ norm are estimated in this
section. We will prove that such convergence is exponential. In
addition, the relative entropy ( with respect to the global
equilibrium ) also has exponential decay.

\subsection{Convergence in weighted $L^{2}$ norm}

The following is our first main result.

\begin{thm}\label{thm-1}  Let $G=(V,E)$ be a graph with its vertex set $V=\{a_1,a_2,\cdots,a_N\}$, edge set $E$, a
given  potential ${\bm \Psi}=(\Psi_i)_{i=1}^N$
  on $V$ and a constant $\beta>0$.
If ${\bm \rho}(t)=(\rho_i(t))_{i=1}^N:[0,\infty)\rightarrow
\mathcal{M}$ is the solution of the Fokker-Planck equation I \eqref{dfpe},
with the initial value ${\bm
\rho}^o=(\rho^o_i)_{i=1}^N\in \mathcal{M}$, then there exists
a constant $C=C({\bm \rho}^o;G,\Psi,\beta)>0$ such that
\begin{equation}
   \label{ggg-eq}
   ||\frac{{\bm \rho}(t)}{{\bm \rho}^{*}} - 1||^{2}_{2,{\bm \rho}^{*}} \le  ||\frac{{\bm \rho}^{o}}{{\bm \rho}^{*}} - 1||^{2}_{2,{\bm \rho}^{*}}\ e^{-Ct}\,,
\end{equation}
 where ${\bm
\rho}^{*}=(\rho_i^*)_{i=1}^N$ is the Gibbs distribution given by
\eqref{eq-1-3}.  In particular,  ${\bm \rho}(t)$ exponentially converges
to global equilibrium: the Gibbs distribution ${\bm \rho}^{*}$ under the
Euclidean metric of $\mathbb{R}^N$  as $t\rightarrow \infty$.
\end{thm}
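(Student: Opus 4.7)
The plan is to exhibit $V(t) := \|{\bm \rho}(t)/{\bm \rho}^* - 1\|^2_{2, {\bm \rho}^*}$ as a Lyapunov functional and establish a differential inequality $\dot V(t) \leq - C\, V(t)$, after which Gronwall gives \eqref{ggg-eq}. Setting $u_i(t) = \rho_i(t)/\rho_i^*$, the defining Gibbs identity $\Psi_i + \beta \log \rho_i^* = -\beta \log K$ rewrites the thermodynamic force $(\Psi_j + \beta \log \rho_j) - (\Psi_i + \beta \log \rho_i)$ appearing in \eqref{dfpe} as $\beta(\log u_j - \log u_i)$. Differentiating $V$ and using $\sum_i \dot\rho_i = 0$ reduces $\dot V$ to $2 \sum_i u_i \dot\rho_i$, which I then reorganize into a sum over undirected edges. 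A direct calculation, exploiting that for each edge with $\Psi_i \neq \Psi_j$ the upwind coefficient $\rho_j$ appears with matching signs in the contributions to $\dot\rho_i$ and $\dot\rho_j$, yields
\begin{align*}
\dot V(t) = &-2\beta \sum_{\{i,j\}:\, \Psi_i < \Psi_j} \rho_j\,(u_j - u_i)(\log u_j - \log u_i) \\
&-2\beta \sum_{\{i,j\}:\, \Psi_i = \Psi_j} \rho_i^*\,(u_j - u_i)^2,
\end{align*}
which is already $\leq 0$ because $(x-y)(\log x - \log y) \geq 0$.

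To upgrade monotonicity to exponential decay, I would combine two ingredients. The logarithmic mean inequality $(u_j - u_i)(\log u_j - \log u_i) \geq (u_j - u_i)^2/\max(u_i, u_j)$, together with uniform bounds $0 < \epsilon \leq u_i(t) \leq M < \infty$ (justified in the next paragraph), plus the identities $\rho_j = u_j \rho_j^*$ and $m_{ij} := \min(\rho_i^*, \rho_j^*)$, gives
\[
  -\dot V(t) \geq \frac{2\beta \epsilon}{M} \sum_{\{i,j\} \in E} m_{ij}\,(u_j - u_i)^2.
\]
The second ingredient is a Poincaré inequality for the reversible Markov kernel $\tilde P_{ij} := m_{ij}/\rho_i^*$ (of Metropolis type) with stationary distribution ${\bm \rho}^*$: its Dirichlet form equals $\sum_{\{i,j\} \in E} m_{ij}(u_i - u_j)^2$ and, since $G$ is connected and ${\bm \rho}^* > 0$, its spectral gap $\tilde\lambda$ is strictly positive; applied to $u$ and using $\sum_i \rho_i^* u_i = \sum_i \rho_i = 1$, the inequality reads $\sum_{\{i,j\} \in E} m_{ij}(u_i - u_j)^2 \geq \tilde\lambda\, V(t)$. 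Combining the two, $\dot V \leq -(2\beta\epsilon\tilde\lambda/M)\, V$, and Gronwall yields \eqref{ggg-eq} with $C = 2\beta\epsilon\tilde\lambda/M$.

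The main obstacle is securing the uniform bounds on $u_i(t)$. The upper bound is immediate: $\rho_i(t) \leq 1$ forces $u_i(t) \leq 1/\rho_i^* =: M$. For the lower bound I would invoke what is already established in \cite{CHLZ}: the trajectory $t \mapsto {\bm \rho}(t)$ is continuous, remains in $\mathcal{M}$ (so every coordinate is strictly positive for all $t \geq 0$), and converges to ${\bm \rho}^*$ with $\rho_i^* > 0$. Hence on a suitably large $[T,\infty)$ one has $\rho_i(t) \geq \rho_i^*/2$, while on the compact interval $[0,T]$ the continuous positive function $\rho_i$ attains a positive minimum; together these give $\rho_i(t) \geq \eta > 0$ uniformly in $t$, and so $u_i(t) \geq \eta/\rho_i^* =: \epsilon > 0$. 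This is the only step where the rate $C$ inherits a dependence on ${\bm \rho}^o$; the remaining factors $\tilde\lambda$, $M$, $\beta$ depend only on $G$, ${\bm \Psi}$, and $\beta$ (through the values $\rho_i^*$).
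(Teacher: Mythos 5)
Your argument is correct and shares the paper's skeleton -- the same weighted-$L^2$ Lyapunov functional, the same edge-wise rewriting of its derivative after the Gibbs substitution $\Psi_j-\Psi_i=-\beta\log\rho_j^*+\beta\log\rho_i^*$, the same logarithmic-mean bound, and a Poincar\'e/spectral-gap step -- but it diverges at the one genuinely delicate point, namely how to keep the trajectory uniformly away from the boundary of $\mathcal{M}$. The paper handles this with Claim~\ref{claim_two}: it constructs an explicit compact invariant region $B$ (via the thresholds $\epsilon_\ell$) and a barrier computation following Theorem 4.1 of \cite{CHLZ}, which makes the rate $C$ in \eqref{def-C} explicit in terms of $\lambda_2$, $m({\bm\rho}^*)/M({\bm\rho}^*)$ and the $\epsilon_\ell$'s. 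You instead invoke the qualitative convergence ${\bm\rho}(t)\to{\bm\rho}^*$ already proved in \cite{CHLZ} (Theorem 2 there, quoted as property 3(b) in the introduction) plus continuity on a compact time interval to extract a uniform lower bound $\rho_i(t)\ge\eta>0$; this is not circular, since only the unquantified convergence is borrowed, and it suffices for the statement as written because $C$ is allowed to depend on ${\bm\rho}^o$. The trade-off is that your $C=2\beta\epsilon\tilde\lambda/M$ is non-constructive (it depends on an unspecified time $T$ and on $\eta$), whereas the paper's invariant-set route is self-contained and quantitative. Two small points to tidy up: the upper bound should read $u_i(t)\le 1/\min_i\rho_i^*=:M$ rather than $1/\rho_i^*$; and $\tilde P_{ij}=\min(\rho_i^*,\rho_j^*)/\rho_i^*$ need not have row sums $\le 1$, so you should phrase the second ingredient simply as positivity of the spectral gap $\tilde\lambda=\min\{\sum_{\{a_i,a_j\}\in E}m_{ij}(u_i-u_j)^2/\mathrm{Var}_{{\bm\rho}^*}(u)\}$ of the weighted Dirichlet form relative to $L^2({\bm\rho}^*)$ (which holds by connectivity), rather than as the gap of a Metropolis chain; this is also where you differ mildly from the paper, which uses the unweighted Laplacian gap $\lambda_2$ together with the crude bound $\rho_i^*\le M({\bm\rho}^*)$ as in Claim~\ref{claim_one}.
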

\begin{proof} Given initial value ${\bm \rho}^o=(\rho_i^o)_{i=1}^N\in
\mathcal{M}$. Let ${\bm
\rho}(t)=(\rho_i(t))_{i=1}^N:[0,\infty)\rightarrow \mathcal{M}$ be
the solution of Fokker-Planck equation I \eqref{dfpe} with initial
value ${\bm \rho}^o\in \mathcal{M}$. For $t\ge 0$, we define
$$L(t)= ||\frac{{\bm \rho}(t)}{{\bm \rho}^{*}} - 1||^{2}_{2,{\bm \rho}^{*}}
= \sum_{i=1}^{N}\frac{(\rho_{i}(t)-\rho_{i}^{*})^{2}}
{\rho_{i}^{*}} ,$$ where ${\bm \rho}^{*}=(\rho_i^*)_{i=1}^N$ is the
Gibbs distribution given by \eqref{eq-1-3}. Now for $t>0$ by
\eqref{dfpe} we have
\begin{align*}
           &\hskip0.5cm \frac{d L(t)}{dt}=\sum_{i=1}^{N}\frac{2(\rho_{i}(t)-\rho_{i}^{*})}{\rho_{i}^{*}}\frac{d\rho_{i}(t)}{dt}\\
                 &=\sum_{i=1}^{N}\frac{2(\rho_{i}(t)-\rho_{i}^{*})}{\rho_{i}^{*}}
                 \Big(\sum_{j\in N(i),\Psi_{j}>\Psi_{i}}\big((\Psi_{j}+\beta\log\rho_{j}(t))-(\Psi_{i}+\beta\log\rho_{i}(t))\big)\rho_{j}(t)\\
                 &+\sum_{j\in N(i),\Psi_{j}<\Psi_{i}} \big((\Psi_{j}+\beta\log\rho_{j}(t))-(\Psi_{i}+\beta\log\rho_{i}(t))\big)\rho_{i}(t)  \\
                   &+\sum_{j\in N(i),
                   \Psi_{j}=\Psi_{i}}\beta(\rho_{j}(t)-\rho_{i}(t))\Big).
\end{align*}
Note that $\Psi_j-\Psi_i=-\beta \log \rho_j^*+\beta \log\rho_i^*$
for $i,j\in \{1,2,\cdots,N\}$ and $\rho_j^*=\rho_i^*$ when
$\Psi_j=\Psi_i$. Combining this with the above equality, we have
\begin{align*}
&\hskip0.5cm \frac{d
L(t)}{dt}\\
&=\sum_{i=1}^{N}\frac{2(\rho_{i}(t)-\rho_{i}^{*})}{\rho_{i}^{*}}
                 \Big(\sum_{j\in N(i),\Psi_{j}>\Psi_{i}}\big((-\beta\log\rho_{j}^{*}+\beta\log\rho_{j}(t))-(-\beta\log\rho_{i}^{*}+\beta\log\rho_{i}(t))\big)\rho_{j}(t)\\
                 &+\sum_{j\in N(i),\Psi_{j}<\Psi_{i}} \big((-\beta\log\rho_{j}^{*}+\beta\log\rho_{j}(t))-(-\beta\log\rho_{i}^{*}+\beta\log\rho_{i}(t))\big)\rho_{i}(t)  \\
                   &+\sum_{j\in N(i),
                   \Psi_{j}=\Psi_{i}}\beta(\frac{\rho_{j}(t)}{\rho_{j}^{*}}-\frac{\rho_{i}(t)}{\rho_{i}^{*}})\frac{\rho_{i}^{*}+\rho_j^*}{2}\Big)\\
                   &=\sum_{i=1}^{N}\frac{2(\rho_{i}(t)-\rho_{i}^{*})}{\rho_i^*}
                 \Big(\sum_{j\in N(i),\Psi_{j}>\Psi_{i}}\beta(\log\frac{\rho_{j}(t)}{\rho_{j}^{*}}-\log\frac{\rho_{i}(t)}{\rho_{i}^{*}})\rho_{j}(t)\\
                 &+\sum_{j\in N(i),\Psi_{j}<\Psi_{i}}\beta(\log\frac{\rho_{j}(t)}{\rho_{j}^{*}}-\log\frac{\rho_{i}(t)}{\rho_{i}^{*}})\rho_{i}(t)+\sum_{j\in N(i),
                   \Psi_{j}=\Psi_{i}}\beta(\frac{\rho_{j}(t)}{\rho_{j}^{*}}-\frac{\rho_{i}(t)}{\rho_{i}^{*}})\frac{\rho_{i}^{*}+\rho_j^*}{2}\Big)
\end{align*}
We denote $ \eta_{i}(t)$ as
$\frac{\rho_{i}(t)-\rho_{i}^{*}}{\rho_{i}^{*}}$ for $t\ge 0$. Then
the above equation can be written as
\begin{align*}
   \frac{dL(t)}{dt}=&\sum_{i=1}^{N}2\eta_{i}(t) \Big(\sum_{j\in N(i),\Psi_{j}>\Psi_{i}}\beta\big(\log(1+\eta_{j}(t))-\log(1+\eta_{i}(t))\big)\rho_{j}(t)\\
                 &+\sum_{j\in N(i),\Psi_{j}<\Psi_{i}}\beta\big(\log(1+\eta_{j}(t))-\log(1+\eta_{i}(t))\big)\rho_{i}(t)  \\
                   &+\sum_{j\in N(i),
                   \Psi_{j}=\Psi_{i}}\beta(\eta_{j}(t)-\eta_{i}(t))\frac{\rho_{i}^{*}+\rho_j^*}{2}\Big).
\end{align*}
For edge $\{ a_i,a_j\}\in E$ with $\Psi_{j}>\Psi_{i}$,
$2\eta_{i}\beta(\log(1+\eta_{j})-\log(1+\eta_{i}))\rho_{j}$ will be
in the above sum at  vertex $a_{i}$;
$2\eta_{j}\beta(\log(1+\eta_{i})-\log(1+\eta_{j}))\rho_{j}$ will be
in the above sum at  vertex $a_{j}$. So we can write the above
equality as
\begin{align}\label{eq-key}
           \frac{dL(t)}{dt}=&-\sum_{\{a_{i},a_{j}\}\in
           E,\Psi_{j}>\Psi_{i}}2\beta(\log(1+\eta_{j}(t))-\log(1+\eta_{i}(t)))(\eta_{j}(t)-\eta_{i}(t))\rho_{j}(t)\nonumber\\
                &-\sum_{\{a_{i},a_{j}\}\in
                E,\Psi_{j}=\Psi_{i}}2\beta(\eta_{j}(t)-\eta_{i}(t))^{2}\frac{\rho_{i}^{*}+\rho_j^*}{2}.
\end{align}
Using \eqref{eq-key} and the following inequality
$$ min
  \{\frac{1}{a},\frac{1}{b}\} \leq\frac{\log a-\log b}{a-b}\leq
max\{ \frac{1}{a}, \frac{1}{b} \}$$ for   $a>0,b>0$ with $a\neq b$,
we have
\begin{align}\label{eq-key1}
           \frac{dL(t)}{dt}\le &-\sum_{\{a_{i},a_{j}\}\in
           E,\Psi_{j}>\Psi_{i}}2\beta(\eta_{j}(t)-\eta_{i}(t))^2 \min \{\frac{1}{1+\eta_i(t)},\frac{1}{1+\eta_j(t)}\}\rho_{j}(t)\nonumber\\
                &-\sum_{\{a_{i},a_{j}\}\in
                E,\Psi_{j}=\Psi_{i}}2\beta(\eta_{j}(t)-\eta_{i}(t))^{2}\frac{\rho_{i}^{*}+\rho_j^*}{2}\nonumber\\
                &=-\sum_{\{a_{i},a_{j}\}\in
           E,\Psi_{j}>\Psi_{i}}2\beta(\eta_{j}(t)-\eta_{i}(t))^2 \min \{\frac{\rho_i^*}{\rho_i(t)},\frac{\rho_j^*}{\rho_j(t)}\}\rho_{j}(t)\nonumber\\
                &-\sum_{\{a_{i},a_{j}\}\in
                E,\Psi_{j}=\Psi_{i}}2\beta(\eta_{j}(t)-\eta_{i}(t))^{2}\frac{\rho_{i}^{*}+\rho_j^*}{2}.
\end{align}
For ${\rm b}=(b_i)_{i=1}^N\in \mathbb{R}^N$, we let
$$m({\rm b})=\min\{
b_i:1\le i\le N\}\text{ and }M({\rm b})=\max\{ b_i:1\le i\le N\}.$$
Put $A(t)= 2\beta \frac{m({\rm \rho}(t))}{M({\rm \rho}(t))}m({\rm
\rho}^*)$ for $t\ge 0$. Then $A(t)>0$ and by \eqref{eq-key1} we have
\begin{align}\label{eq-key2}
           \frac{dL(t)}{dt}\le -A(t)(\sum_{\{a_{i},a_{j}\}\in
           E}(\eta_{j}(t)-\eta_{i}(t))^2).
\end{align}

\noindent
Next we use the following claims (whose proofs appear {\em after} the proof of the present theorem), relating the above right hand side to the spectral gap of the Laplacian matrix $\mathcal{L}(G)$ of graph
$G$.


\begin{claim} \label{claim_one}

\begin{displaymath}
   \sum_{\{a_i,a_j\}\in E} (\eta_{j}(t)-\eta_{i}(t))^{2} \geq
   \frac{\lambda_{2}}{2M({\bm \rho}^{*})} L(t)\,,
\end{displaymath}
where $M({\bm \rho}^{*})$ is the maximal entry of ${\bm \rho}^{*}$
which is less than $1$, and $\lambda_{2}$ is the second smallest
eigenvalue of the Laplacian matrix $\mathcal{L}(G)$ of $G$, or the spectral gap of
$G$.

\end{claim}
We need the following definition before stating the next claim. Let
us denote $$M=\max\{ e^{2|\Psi_i|}:i=1,2,\cdots,N\},$$
$$\epsilon_0=1,$$   and
$$\epsilon_1=\frac{1}{2} \min\left\{ \frac{\epsilon_0}{(1+(2M)^{\frac{1}{\beta}})},
\min\{\rho_i^0:i=1,\cdots,N\}\right\}.$$ For $\ell=2,3,\cdots,N-1$,
we let
$$\epsilon_\ell=\frac{\epsilon_{\ell-1}}{1+(2M)^{\frac{1}{\beta}}}.$$
We define
\begin{align*}
B=\{ &{\bm q}=(q_i)_{i=1}^N\in \mathcal{M}:\sum_{r=1}^\ell q_{i_r}\le 1-\epsilon_{\ell} \text{ where }\ell\in \{1,\cdots,N-1\},\\
&1\le i_1<\cdots <i_\ell \le N \}.
\end{align*}
Then $B$ is a compact subset of $\mathcal{M}$ with respect to  the Euclidean
metric, with
\begin{align*}
\text{int}(B)=\{ &{\bm q}=(q_i)_{i=1}^N\in \mathcal{M}:\sum_{r=1}^\ell q_{i_r}<1-\epsilon_{\ell},\text{ where }\ell\in \{1,\cdots,N-1\},\\
 &1\le i_1<\cdots <i_\ell \le N \}.
\end{align*}
and ${\bm \rho}^0\in \text{int}(B)$.
We have

\medskip
 \begin{claim} \label{claim_two}
 ${\rm \rho}(t)\in B$ for all $t\ge 0$.
 \end{claim}

Using Claim~\ref{claim_one} and \eqref{eq-key2}, we have
\begin{align}\label{eq-key3}
           \frac{dL(t)}{dt}\le -\frac{\lam_2}{2M({\bm \rho}^*)}A(t) \ L(t).
\end{align}
We define
\begin{equation}\label{def-C}
C= \beta \lam_2 \ \frac{m({\bm \rho}^*)}{M({\bm \rho}^*)} \ \frac{1-\epsilon_{L-1}}{\epsilon_1}.
\end{equation}
Clearly  $C>0$ is
dependent on ${\bm \rho}^o$ as well as on $G,\Psi,\beta$, that is $C=C({\bm
\rho}^o;G,\Psi,\beta)$. By the definition of $B$ and Claim~\ref{claim_two}, we
have
\begin{align*}
A(t)&=2\beta \frac{m({\bm \rho}(t))}{M({\bm \rho}(t))}m({\bm
\rho}^*)\ge 2\beta m({\bm \rho}^*) \min \{ \frac{m({\bm q})}{M({\bm
q})}:{\bm q}\in B\} \\
&\ge  2\beta m({\bm
\rho}^*)\frac{1-\epsilon_{L-1}}{\epsilon_1}\\
&= \frac{2M({\bm \rho}^*)}{\lam_2} C
\end{align*}
for $t\ge 0$. Combining this with \eqref{eq-key3}, we get
$\frac{dL(t)}{dt}\le -CL(t)$ for $t>0$. This implies that $L(t)\le
L(0)e^{-Ct}$ for $t\ge 0$. Since $L(0) =  ||\frac{{\bm
    \rho}(0)}{{\bm \rho}^{*}} - 1||^{2}_{2,{\bm \rho}^{*}}$, we have \eqref{ggg-eq}, completing the proof of the theorem, modulo the claim (see below).
\end{proof}
\begin{rem} Given a graph $G=(V,E)$,
a potential ${\bm \Psi}=(\Psi_i)_{i=1}^N$
  on $V$ and a constant $\beta>0$,  the positive constant $C=C({\bm
\rho}^o;G,\Psi,\beta)$ given by \eqref{def-C} is dependent on the initial value ${\bm
\rho}^o\in \mathcal{M}$. In fact  $C({\bm
\rho}^o;G,\Psi,\beta)\rightarrow 0$, when the initial distribution ${\bm
\rho}^o$ converges to the boundary of $\mathcal{M}$.
\end{rem}

We now write a simple observation yielding the first claim used in the proof of the above theorem.
\begin{proof}[Proof of Claim~\ref{claim_one}]
Indeed we have
\begin{align*}
   &\hskip0.5cm \sum_{\{a_i,a_j\}\in E} (\eta_{j}(t)-\eta_{i}(t))^{2}=\frac{1}{2}\sum_{i=1}^N\sum_{j\in N(i)} (\eta_{j}(t)-\eta_{i}(t))^{2}\\
   &\geq \frac{1}{2M({\bm \rho^{*}})}\sum_{i=1}^N\sum_{j\in N(i)} (\eta_{j}(t)-\eta_{i}(t))^{2}\rho^{*}_{i}\geq
   \frac{\lambda_{2}}{2M({\bm \rho^{*}})}Var_{{\bm \rho}^{*}}({\bm \eta}(t))
\end{align*}
the last inequality comes from the Poincare-type inequality (See for example \cite{BT}). In which
\begin{displaymath}
   Var_{{\bm \rho}^{*}}({\bm \eta}(t)) =
   \sum_{i=1}^{N}\rho^{*}_{i}\eta_{i}^{2}(t) - (\sum_{i=1}^{N}\rho^{*}_{i}\eta_{i}(t))^{2}
\end{displaymath}
Note that
\begin{displaymath}
   \sum_{i=1}^{N}\rho^{*}_{i}\eta_{i}(t) = \sum_{i=1}^{N}\rho_{i}(t) -
   \sum_{i=1}^{N}\rho^{*}_{i} = 0
\end{displaymath}
and
\begin{displaymath}
   \sum_{i=1}^{N}\rho_{i}^{*}\eta^2_{i}(t) = L(t)
\end{displaymath}
Hence we have
\begin{displaymath}
   \sum_{\{a_i,a_j\}\in E} (\eta_{j}(t)-\eta_{i}(t))^{2} \geq
   \frac{\lambda_{2}}{2M({\bm \rho}^{*})} L(t)\,.
\end{displaymath}
\end{proof}

Finally, we present the proof of the second claim.

\begin{proof}[Proof of Claim~\ref{claim_two}] We  follow closely the argument in the proof of Theorem 4.1 in \cite{CHLZ}. Since ${\bm \rho}^0\in \text{int}(B)$,
 it is sufficient to show for any ${\bm q}\in B$, the solution ${\bm q}(t)$ through ${\bm q}$ remains in $\text{int}(B)$
for small $t>0$. Let ${\bm q}=(q_i)_{i=1}^N\in B$ and $${\bm
q}(t):[0,c({\bm q}))\rightarrow \mathcal{M}$$ be the  solution to
the equation \eqref{dfpe} with initial value ${\bm q}$ on its
maximal interval of existence. In order to show ${\bm q}(t)\in
\text{int}(B)$ for  small $t>0$, it is sufficient to show that for
any $\ell\in \{1,2,\cdots,N-1\}$ and $1\le i_1<i_2<\cdots i_\ell \le
N$, one has
$$\sum_{r=1}^\ell q_{i_r}(t)< 1-\epsilon_{\ell}\,,$$ for sufficiently small $t>0$.

Given $\ell\in \{1,2,\cdots,N-1\}$ and $1\le i_1<i_2<\cdots i_\ell
\le N$, since ${\bm q}\in B$, we have
$$\sum_{r=1}^\ell q_{i_r}\le 1-\epsilon_{\ell}.$$
Then there are two cases. The first one is  $$\sum_{r=1}^\ell
q_{i_r}<1-\epsilon_{\ell}.$$ It is clear that
$$\sum_{r=1}^\ell q_{i_r}(t)<1-\epsilon_{\ell}\,,$$ for small enough $t>0$ by continuity.

\medskip
The second case is $$\sum_{r=1}^\ell q_{i_r}=1-\epsilon_{\ell}.$$
Let $A=\{i_1,i_2,\cdots,i_\ell\}$ and $A^c=\{1,2,\cdots,N\}\setminus
A$. Then for any $j\in A^c$,
\begin{equation}\label{jle}
q_j\le 1-(\sum_{r=1}^\ell q_{i_r})=\epsilon_\ell.
\end{equation}
Since ${\bm q}\in B$, we have
$$\sum_{j=1}^{\ell-1} q_{s_j}\le 1-\epsilon_{\ell-1},$$
 for any $1\le s_1<s_2<\cdots<s_{\ell-1}\le N$. Hence for each $i\in A$,
\begin{equation}\label{ige}
q_i\ge
1-\epsilon_{\ell}-(1-\epsilon_{\ell-1})=\epsilon_{\ell-1}-\epsilon_{\ell}.
\end{equation}
Combining equations \eqref{jle},\eqref{ige} and the fact  $$\epsilon_\ell\le \frac{\epsilon_{\ell-1}}{1+(2M)^{\frac{1}{\beta}}}\,,$$ one has, for any $i\in A, j\in A^c$,
\begin{equation}\label{eq-lezero}
\Psi_j-\Psi_i+\beta(\log q_j-\log q_i)\le \Psi_j-\Psi_i+\beta(\log
\epsilon_\ell-\log(\epsilon_{\ell-1}-\epsilon_\ell)) \le -\log 2.
\end{equation}
For $\{a_i,a_j\}\in E$, we set
\begin{equation}\label{def-c}
C(\{a_i,a_j\})=\begin{cases} q_j   &\text{ if } \Psi_i<\Psi_j\\
q_i &\text{ if }\Psi_i>\Psi_j\\
\frac{q_{i}-q_{j}}{\log q_{i}-\log q_{j}} &\text{ if }\Psi_i=\Psi_j
\end{cases}.
\end{equation}
Clearly, $C(\{ a_i,a_j\})>0$ for $\{a_i,a_j\}\in E$. Since the graph $G$ is connected, there exists $i_*\in A,j_*\in A^c$
such that $\{a_{i_*},a_{j_*}\}\in E$. Thus
\begin{equation}\label{eq-connect}
\sum \limits_{i\in A,j\in A^c, \{a_i,a_j\}\in E}C(\{a_i,a_j\})\ge C(\{a_{i_*},a_{j_*}\})>0.
\end{equation}
Now by \eqref{eq-lezero} and \eqref{eq-connect}, one has
\begin{eqnarray*}
\frac{\mathrm{d}}{dt}\sum_{r=1}^\ell q_{i_r}(t)\mid_{t=0}&=&\sum_{i\in A}\Big( \sum_{j\in N(i)} C(\{a_i,a_j\})\big(\Psi_j-\Psi_i+\beta(\log q_j-\log q_i)\big)\Big)\\
&=&\sum_{i\in A} \Big( \sum_{j\in A\cap N(i)} C(\{a_i,a_j\})\big(\Psi_j-\Psi_i+\beta(\log q_j-\log q_i)\big)+\\
&& \sum_{j\in A^c\cap N(i)} C(\{a_i,a_j\})\big(\Psi_j-\Psi_i+\beta(\log q_j-\log q_i)\big)\Big)\\
&=&\sum_{i\in A}\Big( \sum_{j\in A^c\cap N(i)} C(\{a_i,a_j\})\big(\Psi_j-\Psi_i+\beta(\log q_j-\log q_i)\big)\Big)\\
&\le& \sum_{i\in A}\big( \sum_{j\in A^c\cap N(i)}  -C(\{a_i,a_j\})\log 2\big)\\
&=&-\log 2 \big( \sum \limits_{i\in A,j\in A^c, \{a_i,a_j\}\in E}C(\{a_i,a_j\})\big)\\
&\le& -C(\{a_{i_*},a_{j_*}\})\log 2 <0.
\end{eqnarray*}
Combining this with the fact $$\sum_{r=1}^\ell
q_{i_r}=1-\epsilon_{\ell},$$ it is clear that
$$\sum_{r=1}^\ell q_{i_r}(t)<1-\epsilon_{\ell}\,,$$ for sufficiently small $t>0$.
This finishes the proof of Claim~\ref{claim_two}.
\end{proof}

Using the same technique, we have the following second main result.

\begin{thm}\label{thm-2}  Let $G=(V,E)$ be a graph with its vertex set $V=\{a_1,a_2,\cdots,a_N\}$, edge set $E$,
a given  potential ${\bm \Psi}=(\Psi_i)_{i=1}^N$
  on $V$ and a constant $\beta>0$.
If ${\bm \rho}(t):[0,\infty)\rightarrow \mathcal{M}$ is the solution
of  Fokker-Planck equation II \eqref{dfpe2},
 with the initial value ${\bm
\rho}^o=(\rho^o_i)_{i=1}^N\in \mathcal{M}$, then
\begin{equation}
\label{ggg-eq-second}
   ||\frac{{\bm \rho}(t)}{{\bm \rho}^{*}} - 1||^{2}_{2,{\bm \rho}^{*}} \le   ||\frac{{\bm \rho}^{o}}{{\bm \rho}^{*}} - 1||^{2}_{2,{\bm \rho}^{*}}\ e^{-Ct}\,,
\end{equation}
where ${\bm \rho}^{*}=(\rho_i^*)_{i=1}^N$ is the Gibbs distribution
given by \eqref{eq-1-3} and $C=\beta \lambda_2 \frac{\min \{ \rho_i^*:1\le i\le N\}}{\max \{ \rho_i^*:1\le i\le N\}}$\,, where $\lambda_2$ is the spectral gap of $G$.
In particular,  ${\bm \rho}(t)$  exponentially
converges to global equilibrium: the Gibbs distribution ${\bm \rho}^{*}$
under the Euclidean metric of $\mathbb{R}^N$,  as $t\rightarrow
\infty$.
\end{thm}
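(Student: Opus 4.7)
The plan is to run exactly the energy/Lyapunov argument used for Theorem~\ref{thm-1}, with the same functional
$L(t)=\|\bm\rho(t)/\bm\rho^{*}-1\|_{2,\bm\rho^{*}}^{2}$ and the same change of variables $\eta_i(t)=(\rho_i(t)-\rho_i^{*})/\rho_i^{*}$. First I would differentiate $L$ along the trajectories of \eqref{dfpe2}, and use the Gibbs identity $\Psi_j-\Psi_i=\beta(\log\rho_i^{*}-\log\rho_j^{*})$ to rewrite every difference $\bar\Psi_j-\bar\Psi_i$ appearing in \eqref{dfpe2} as $\beta\bigl(\log(1+\eta_j)-\log(1+\eta_i)\bigr)$. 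Then, just as in the derivation of \eqref{eq-key}, I would regroup the double sum by edges instead of by vertices.

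The decisive difference from Theorem~\ref{thm-1} lies in which weight is attached to each edge. Because the split in \eqref{dfpe2} is by the sign of $\bar\Psi_j-\bar\Psi_i$, which is the sign of $\eta_j-\eta_i$, each edge $\{a_i,a_j\}$ contributes exactly one term
\[
-2\beta\bigl(\log(1+\eta_j)-\log(1+\eta_i)\bigr)(\eta_j-\eta_i)\,c_{ij}(t),
\]
where $c_{ij}(t)=\rho_j(t)$ if $\eta_j>\eta_i$ and $c_{ij}(t)=\rho_i(t)$ otherwise; in other words $c_{ij}(t)$ is always the density at the endpoint with the \emph{larger} value of $\rho_k/\rho_k^{*}$. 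Applying the elementary inequality $\frac{\log a-\log b}{a-b}\ge\min\{1/a,1/b\}$ as in the proof of Theorem~\ref{thm-1} bounds this from above by $-2\beta(\eta_j-\eta_i)^{2}\min\{\rho_i^{*}/\rho_i,\rho_j^{*}/\rho_j\}\,c_{ij}(t)$, and here the crucial algebraic collapse happens: when $\eta_j>\eta_i$ one has $\rho_j/\rho_j^{*}>\rho_i/\rho_i^{*}$, so the minimum equals $\rho_j^{*}/\rho_j$ and $(\rho_j^{*}/\rho_j)\,c_{ij}=(\rho_j^{*}/\rho_j)\rho_j=\rho_j^{*}$; the symmetric case yields $\rho_i^{*}$. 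Thus each edge contributes at most $-2\beta(\eta_j-\eta_i)^{2}\min(\rho_i^{*},\rho_j^{*})\le -2\beta\,m(\bm\rho^{*})(\eta_j-\eta_i)^{2}$, an estimate that is free of $\bm\rho(t)$.

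Summing over edges and invoking Claim~\ref{claim_one} immediately gives
$\tfrac{dL}{dt}\le -\beta\lambda_{2}\,\tfrac{m(\bm\rho^{*})}{M(\bm\rho^{*})}L(t)$, and Gr\"onwall's inequality yields \eqref{ggg-eq-second} with the stated constant $C$. The conceptual point—and the reason no analogue of Claim~\ref{claim_two} is required, so that $C$ does not depend on the initial datum $\bm\rho^{o}$—is that the upwinding built into \eqref{dfpe2} (the density at the higher-$\bar\Psi$ endpoint is used as the transport weight) is \emph{exactly} the weight that cancels the $1/(1+\eta)$ factor produced by the log inequality. The only place I expect to need care is the bookkeeping of the $(\cdot)^{+}$ and $(\cdot)^{-}$ pieces when regrouping the double sum by edges and verifying that $c_{ij}=c_{ji}$ on each edge; beyond that, the proof is a strictly cleaner version of the proof of Theorem~\ref{thm-1}.
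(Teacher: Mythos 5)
Your proposal is correct and follows essentially the same route as the paper's proof: the same Lyapunov functional $L(t)$ and variables $\eta_i$, the same regrouping of the vertex sums into edge terms weighted by the density at the higher-$\bar\Psi$ endpoint, the same logarithmic mean inequality, the identical cancellation $\min\{\rho_i^*/\rho_i,\rho_j^*/\rho_j\}\,\rho_j=\rho_j^*$ that removes all dependence on ${\bm\rho}(t)$, and then Claim~\ref{claim_one} plus Gr\"onwall to obtain $C=\beta\lambda_2\,m({\bm\rho}^*)/M({\bm\rho}^*)$. Your observation that no analogue of Claim~\ref{claim_two} is needed, so $C$ is independent of the initial datum, is exactly the point the paper makes as well.
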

\begin{proof}
Given initial value ${\bm \rho}^0=(\rho_i^0)_{i=1}^N\in
\mathcal{M}$. Let ${\bm
\rho}(t)=(\rho_i(t))_{i=1}^N:[0,\infty)\rightarrow \mathcal{M}$ be
the solution of Fokker-Planck equation II \eqref{dfpe2} with initial
value ${\bm \rho}^0\in \mathcal{M}$. For $t\ge 0$, we define
$$L(t)= ||\frac{{\bm \rho}(t)}{{\bm \rho}^{*}} - 1||^{2}_{2,{\bm \rho}^{*}}
= \sum_{i=1}^{N}\frac{(\rho_{i}(t)-\rho_{i}^{*})^{2}}
{\rho_{i}^{*}} ,$$ where ${\bm \rho}^{*}=(\rho_i^*)_{i=1}^N$ is the
Gibbs distribution given by \eqref{eq-1-3}. Now for $t>0$, by
\eqref{dfpe2}, we have
\begin{align*}
           \frac{dL(t)}{dt}&=\sum_{i=1}^{N}\frac{2(\rho_{i}(t)-\rho_{i}^{*})}{\rho_{i}^{*}}\frac{d\rho_{i}(t)}{dt}\\
                 &=\sum_{i=1}^{N}\frac{2(\rho_{i}(t)-\rho_{i}^{*})}{\rho_{i}^{*}}
                 \Bigl(\sum_{j\in N(i),\bar\Psi_{j}>\bar\Psi_{i}}\big((\Psi_{j}+\beta\log\rho_{j}(t))-(\Psi_{i}+\beta\log\rho_{i}(t))\big)\rho_{j}(t)\\
                 &+\sum_{j\in N(i),\bar\Psi_{j}<\bar\Psi_{i}} \big((\Psi_{j}+\beta\log \rho_{j}(t))-(\Psi_{i}+\beta\log\rho_{i}(t))\big)\rho_{i}(t)\Bigr)  \\
                 &=\sum_{i=1}^{N}\frac{2(\rho_{i}(t)-\rho_{i}^{*})}{\rho_{i}^{*}}
                 \Bigl(\sum_{j\in N(i),\bar\Psi_{j}>\bar\Psi_{i}}\beta(\log\frac{\rho_{j}(t)}{\rho_{j}^{*}}-\log\frac{\rho_{i}(t)}{\rho_{i}^{*}})\rho_{j}\\
                 &+\sum_{j\in
                 N(i),\bar\Psi_{j}<\bar\Psi_{i}}\beta(\log\frac{\rho_{j}(t)}{\rho_{j}^{*}}-\log\frac{\rho_{i}(t)}{\rho_{i}^{*}})\rho_{i}\Bigr)\,,\\
                 \end{align*}
the last equality comes from the fact $\Psi_j-\Psi_i=-\beta \log
\rho_j^*+\beta \log\rho_i^*$ for $i,j\in \{1,2,\cdots,N\}$.

Denoting
$\frac{\rho_{i}(t)-\rho_{i}^{*}}{\rho_{i}^{*}}$  by $ \eta_{i}(t)$, for $t>0$, the
equation will be written as
\begin{align*}
   \frac{dL(t)}{dt}=&\sum_{i=1}^{N}2\beta\eta_{i}(t)\Big(\sum_{j\in N(i),\bar\Psi_{j}>\bar\Psi_{i}}\big(\log(1+\eta_{j}(t))-\log(1+\eta_{i}(t))\big)\rho_{j}(t)\\
                 &+\sum_{j\in
                 N(i),\bar\Psi_{j}<\bar\Psi_{i}}\big(\log(1+\eta_{j}(t))-\log(1+\eta_{i}(t))\big)\rho_{i}(t)\Big)\\
                 &=\sum_{\{a_i,a_j\}\in E,\bar\Psi_j>\bar\Psi_j}
                 2\beta\big(\log(1+\eta_{j}(t))-\log(1+\eta_{i}(t))\big)\big(\eta_{j}(t)-\eta_{i}(t)\big)\rho_{j}(t).
                 \end{align*}
Moreover note that $\eta_{i}(t)=\eta_{j}(t)$ when
$\bar\Psi_{i}=\bar\Psi_{j}$, we have
\begin{align}\label{eq-key-second}
   \frac{dL(t)}{dt}=&-\sum_{\{a_i,a_j\}\in E,\bar\Psi_j>\bar\Psi_i}
                 2\beta\big(\log(1+\eta_{j}(t))-\log(1+\eta_{i}(t))\big)\big(\eta_{j}(t)-\eta_{i}(t)\big)\rho_{j}(t)\nonumber\\
                 &-\sum_{\{a_i,a_j\}\in
                 E,\bar\Psi_j=\bar\Psi_i}2\beta\big(\eta_j(t)-\eta_i(t)\big)^2\frac{\rho_{i}^{*}+\rho_j^*}{2}.
\end{align}
 Using \eqref{eq-key-second} and the following inequality
$$ min
  \{\frac{1}{a},\frac{1}{b}\} \leq\frac{\log a-\log b}{a-b}\leq
max\{ \frac{1}{a}, \frac{1}{b} \}\,,$$ for   $a>0,b>0$ with $a\neq b$,
we have
\begin{align}\label{eq-key1-second}
           \frac{dL(t)}{dt}\le &-\sum_{\{a_{i},a_{j}\}\in
           E,\bar \Psi_{j}>\bar \Psi_{i}}2\beta\big(\eta_{j}(t)-\eta_{i}(t)\big)^2 \min \{\frac{1}{1+\eta_i(t)},\frac{1}{1+\eta_j(t)}\}\rho_{j}(t)\nonumber\\
                &-\sum_{\{a_{i},a_{j}\}\in
                E,\bar \Psi_{j}=\bar \Psi_{i}}2\beta\big(\eta_{j}(t)-\eta_{i}(t)\big)^{2}\frac{\rho_{i}^{*}+\rho_j^*}{2}\nonumber\\
                &=-\sum_{\{a_{i},a_{j}\}\in
           E,\bar \Psi_{j}>\bar \Psi_{i}}2\beta\big(\eta_{j}(t)-\eta_{i}(t)\big)^2 \min \{\frac{\rho_i^*}{\rho_i(t)},\frac{\rho_j^*}{\rho_j(t)}\}\rho_{j}(t)\nonumber\\
                &-\sum_{\{a_{i},a_{j}\}\in
                E,\bar \Psi_{j}=\bar \Psi_{i}}2\beta\big(\eta_{j}(t)-\eta_{i}(t)\big)^{2}\frac{\rho_{i}^{*}+\rho_j^*}{2}.
\end{align}
Now note that $\frac{\rho_{i}^{*}}{\rho_{i}(t)}\ge \frac{\rho_{j}^{*}}{\rho_{j}(t)}$ when $\bar \Psi_{j}>\bar \Psi_{i}$, hence
$\min \{\frac{\rho_i^*}{\rho_i(t)},\frac{\rho_j^*}{\rho_j(t)}\}\rho_{j}(t)=\rho_j^*$ when $\bar \Psi_{j}>\bar \Psi_{i}$.
Combining this with \eqref{eq-key1-second}, we have
\begin{align}\label{eq-key2-second}
           \frac{dL(t)}{dt}\le &-\sum_{\{a_{i},a_{j}\}\in
           E,\bar \Psi_{j}>\bar \Psi_{i}}2\beta\big(\eta_{j}(t)-\eta_{i}(t)\big)^2 \rho_{j}^*\nonumber\\
                &-\sum_{\{a_{i},a_{j}\}\in
                E,\bar \Psi_{j}=\bar \Psi_{i}}2\beta\big(\eta_{j}(t)-\eta_{i}(t)\big)^{2}\frac{\rho_{i}^{*}+\rho_j^*}{2}\nonumber\\
                &\le -2\beta \min\{ \rho_i^*:1\le i\le N\} \sum_{\{a_{i},a_{j}\}\in E} (\eta_{j}(t)-\eta_{i}(t))^2.
\end{align}
By the same argument as in Claim~\ref{claim_one} (used in the proof of Theorem \ref{thm-1}),   we have
$$\sum_{\{a_{i},a_{j}\}\in E} (\eta_{j}(t)-\eta_{i}(t))^2\ge \frac{\lam_2}{2\max \{ \rho_i^*:1\le i\le N\}}L(t).$$
Combining this with \eqref{eq-key2-second},  we get
$\frac{dL(t)}{dt}\le -CL(t)$ for $t>0$, where $C=\beta \lambda_2 \frac{\min\{ \rho_i^*:1\le i\le N\}}{\max \{ \rho_i^*:1\le i\le N\}}$. This implies $L(t)\le
L(0)e^{-Ct}$ for $t\ge 0$, that is \eqref{ggg-eq-second} is true, completing the proof of the theorem.
\end{proof}

\subsection{Exponential decay of the relative entropy}
\label{sec:entropy}

In the following, we show that the entropy decay rate of the Fokker-Planck Equation (II) on $G=(V,E)$ with its vertex set $V=\{a_1,a_2,\cdots,a_N\}$, edge set $E$, a
given  potential ${\bm \Psi}=(\Psi_i)_{i=1}^N$
on $V$ and a constant $\beta>0$ can be bounded in terms of the modified logarithmic Sobolev constant (also known as the ``entropy constant")
$\gamma_0:=\gamma_0(G)$ of the underlying graph $G$:
the optimal $\gamma_0>0$ such that
\begin{equation} \label{mod-log}
   2\gamma_0 \mathrm{Ent}(f) \le \mathcal{E}(f,\log f)\,,
\end{equation}
over all $f:V \to \mathbb{R}$, with $f>0$; recall here the standard notation for the Entropy functional and the Dirichlet form:

\[ \mathrm{Ent} f := \mathrm{Ent}_{{\bm \rho}^{*}}f :=\mathrm{E}_{{\bm \rho}^{*}}(f \log f) - (\mathrm{E}_{{\bm \rho}^{*}} f) \log (\mathrm{E}_{{\bm \rho}^{*}}f)
\,.\]
and
\[ \mathcal{E}(f,\log f) = \sum_{i=1}^N \sum_{j\in N(i)}\big(\log f(a_i) - \log
f(a_j)\big)\big(f(a_i)-f(a_j)\big)\rho^{*}_{i}\\
\,.\]
where ${\bm \rho}^{*}=(\rho_i^*)_{i=1}^N$ is the Gibbs distribution
given by \eqref{eq-1-3}.  See \cite{BT} (where this constant was denoted as $\rho_0$) and references therein, for more information on $\gamma_0$ of a graph and that of a Markov kernel on $G$.

\begin{thm}\label{thm-ent}  Let $G=(V,E)$ be a graph, with its vertex set $V=\{a_1,a_2,\cdots,a_N\}$, edge set $E$, a
given  potential ${\bm \Psi}=(\Psi_i)_{i=1}^N$
  on $V$ and a constant $\beta>0$.
If ${\bm \rho}(t)=(\rho_i(t))_{i=1}^N:[0,\infty)\rightarrow \mathcal{M}$ is the solution
of  Fokker-Planck equation II \eqref{dfpe2}
 with the initial value ${\bm
\rho}^o=(\rho^o_i)_{i=1}^N\in \mathcal{M}$, then
\begin{displaymath}
   H({\bm \rho}(t)|{\bm  \rho}^*)\leq H({\bm \rho}^0|{\bm  \rho}^*)e^{-c t} \text{ for }t\ge 0,
\end{displaymath}
where ${\bm \rho}^{*}=(\rho_i^*)_{i=1}^N$ is the Gibbs distribution
given by \eqref{eq-1-3} and $c=\beta  \gamma_0\frac{\min\{ \rho^{*}_{i}:1\le i\le
N\}}{\max\{ \rho_i^*:1\le i\le N\}}$.
\end{thm}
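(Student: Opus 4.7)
The plan is to derive the entropy dissipation identity along \eqref{dfpe2}, recognize it (up to a multiplicative constant depending on $\min_k\rho_k^*/\max_k\rho_k^*$) as the Dirichlet form $\mathcal{E}(f,\log f)$ evaluated at the relative density $f_i(t):=\rho_i(t)/\rho_i^*$, and then invoke the modified logarithmic Sobolev inequality \eqref{mod-log} together with the identification $\mathrm{Ent}_{{\bm \rho}^*}(f)=H({\bm \rho}|{\bm \rho}^*)$ to produce a differential inequality of the form $\frac{d}{dt}H\le -c\,H$, whence Gronwall yields the theorem.

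First, the Gibbs form \eqref{eq-1-3} gives $\Psi_i=-\beta\log\rho_i^*-\beta\log K$, and hence $(\Psi_j+\beta\log\rho_j)-(\Psi_i+\beta\log\rho_i)=\beta(\log f_j-\log f_i)$; in particular the sign condition $\bar\Psi_j>\bar\Psi_i$ reduces to $f_j>f_i$. Writing $H({\bm \rho}(t)|{\bm \rho}^*)=\sum_i\rho_i(t)\log f_i(t)$ and using mass conservation $\sum_i \dot\rho_i(t)=0$, I obtain $\frac{d}{dt}H=\sum_i\log f_i(t)\,\dot\rho_i(t)$. Substituting \eqref{dfpe2} and combining, for each edge $\{a_i,a_j\}\in E$ with $f_j>f_i$, the contribution at vertex $i$ with the one at vertex $j$, the cross terms collapse into
\[
\frac{d}{dt}H({\bm \rho}(t)|{\bm \rho}^*)=-\beta\sum_{\{a_i,a_j\}\in E,\;f_j>f_i}(\log f_j-\log f_i)^2\,\rho_j(t),
\]
with edges of equal $f$-value contributing nothing.

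To match this to the Dirichlet form $\mathcal{E}(f,\log f)=\sum_{\{a_i,a_j\}\in E}(\log f_j-\log f_i)(f_j-f_i)(\rho_i^*+\rho_j^*)$, I use the elementary inequality $f_j(\log f_j-\log f_i)\ge f_j-f_i$ valid for $f_j\ge f_i>0$ (a rewriting of $\log x\ge 1-x^{-1}$) together with $\rho_j=f_j\rho_j^*$, so that each summand above dominates $(\log f_j-\log f_i)(f_j-f_i)\,\rho_j^*$; and then the trivial bound $\rho_j^*\ge \frac{\min_k\rho_k^*}{2\max_k\rho_k^*}(\rho_i^*+\rho_j^*)$. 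This yields $\frac{d}{dt}H\le -\tfrac{\beta\min_k\rho_k^*}{2\max_k\rho_k^*}\,\mathcal{E}(f,\log f)$. Since $\mathrm{E}_{{\bm \rho}^*}f=\sum_i\rho_i=1$ implies $\mathrm{Ent}_{{\bm \rho}^*}(f)=\sum_i\rho_i\log f_i=H({\bm \rho}|{\bm \rho}^*)$, inserting \eqref{mod-log} gives $\frac{d}{dt}H\le -\beta\gamma_0\,\frac{\min_k\rho_k^*}{\max_k\rho_k^*}\,H$, and Gronwall closes the proof with the stated $c$. The step I expect to be most delicate is precisely this passage to the Dirichlet form: the dissipation naturally weights each edge by the running density $\rho_j$ at the endpoint with larger $f$, whereas the Dirichlet form weights it by the symmetric stationary pair $\rho_i^*+\rho_j^*$, and bridging the two via $\log x\ge 1-x^{-1}$ and $\rho_j^*\ge\tfrac{m({\bm\rho}^*)}{2M({\bm\rho}^*)}(\rho_i^*+\rho_j^*)$ is what forces the ratio $\min\rho^*/\max\rho^*$ (rather than $1$) into the final rate.
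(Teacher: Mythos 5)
Your proposal is correct and follows essentially the same route as the paper's proof: the same entropy dissipation identity $\frac{d}{dt}H=-\beta\sum_{f_j>f_i}(\log f_j-\log f_i)^2\rho_j$, the same elementary bound $\log x\ge 1-x^{-1}$ converting the running weight $\rho_j$ into $\rho_j^*$, the same comparison $\rho_j^*\ge\frac{m({\bm\rho}^*)}{2M({\bm\rho}^*)}(\rho_i^*+\rho_j^*)$ to reach the Dirichlet form (noting that the paper's double-sum definition of $\mathcal{E}(f,\log f)$ equals your edge-sum with weight $\rho_i^*+\rho_j^*$), and the same application of \eqref{mod-log} with Gronwall, giving the identical constant $c$.
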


\begin{proof} Given ${\bm
\rho}^o=(\rho^o_i)_{i=1}^N\in \mathcal{M}$. Let ${\bm \rho}(t)=(\rho_i(t))_{i=1}^N:[0,\infty)\rightarrow \mathcal{M}$
be the solution
of  Fokker-Planck equation II \eqref{dfpe2}
 with the initial value ${\bm \rho}^0$.

Recall that the
relative entropy of ${\bm \rho}=(\rho_i)_{i=1}^N\in \mathcal{M}$ with respect to ${\bm \rho}^{*}$:
\begin{displaymath}
   H({\bm \rho}|{\bm \rho}^{*})=\sum_{i=1}^{N}\rho_{i}\log\frac{\rho_{i}}{\rho^{*}_{i}}\,.
\end{displaymath}
Since the Gibbs distribution is given by
$\displaystyle \rho^{*}_{i} = \frac{1}{K}e^{-\Psi_{i}/\beta}\,,$
we also have
\begin{displaymath}
   \Psi_{i} = - \beta\log\rho^{*}_{i} -\beta \log K\,.
\end{displaymath}
For $t\ge 0$, let $f(t)= {\bm \rho}(t)/{\bm \rho}^{*}$, i.e, $f(t)(a_i)=\rho_i(t)/\rho^*_i$ for $i=1,2,\cdots,N$, we rewrite the relative entropy as usual:
\begin{align*}
\mathrm{Ent} f(t) &:= \mathrm{Ent}_{{\bm \rho}^{*}}f(t)=\mathrm{E}_{{\bm \rho}^{*}}(f \log f) - (\mathrm{E}_{{\bm \rho}^{*}} f) \log (\mathrm{E}_{{\bm \rho}^{*}}f)\\
&= H({\bm \rho}(t)|{\bm \rho}^{*}).
\end{align*}

We write simply $f_i(t)=f(t)(a_i)$.
Then the Fokker-Planck equation II \eqref{dfpe2} becomes
\begin{displaymath}
    \frac{d \rho_{i}(t)} {dt} = \beta\Big(\sum_{\substack{j\in N(i),\\ f_j(t)>f_i(t)}}(\log f_{j}(t)-\log f_{i}(t))\rho_{j}(t)+
    \sum_{\substack{j\in N(i),\\ f_j(t)<f_i(t)}}(\log f_{j}(t)-\log f_{i}(t))\rho_{i}(t)\Big).
\end{displaymath}

Observing that $\frac{a-b}{a}\le \log a-\log b$ when $a>b>0$, we bound the entropy decay by proceeding as follows.
\begin{eqnarray*}
& &\frac{\mathrm{d} \mathrm{Ent}
(f(t))}{\mathrm{d}t}=\frac{\mathrm{d} \big(\sum \limits_{i=1}^N
\rho_i(t)\log \frac{\rho_i(t)}{\rho_i^*}\big)}{\mathrm{d}t}\\
&=&\sum \limits_{i=1}^N \frac{\mathrm{d} \rho_i(t)}{\mathrm{d}t}
\log f_i(t)+\sum \limits_{i=1}^N \frac{\mathrm{d}
\rho_i(t)}{\mathrm{d}t}=\sum \limits_{i=1}^N \frac{\mathrm{d}
\rho_i(t)}{\mathrm{d}t}
\log f_i(t) \\
&=&\beta\sum_{i=1}^{N}\log f_{i}\Big(\sum_{\substack{j\in N(i),\\
f_j(t)>f_i(t)}}(\log f_{j}(t)-\log f_{i}(t))\rho_{j}(t)
+\sum_{\substack{j\in N(i),\\ f_j(t)<f_i(t)}}(\log f_{j}(t)-\log f_{i}(t))\rho_{i}(t)\Big)\\
&=& -\beta\sum_{\substack{\{a_i,a_j\}\in E\\
f_j(t)<f_i(t)}} (\log f_{i}(t) - \log
f_{j}(t))^2 \rho_i(t)\\
&\le & - \beta\sum_{\substack{\{a_i,a_j\}\in E\\
f_j(t)< f_i(t)}}\big(\log f_{i}(t) - \log
f_{j}(t)\big)\frac{(f_{i}(t)-f_{j}(t))}{f_i(t)}\rho_{i}(t)\\
&=& - \beta\sum_{\substack{\{a_i,a_j\}\in E\\
f_j(t)< f_i(t)}}\big(\log f_{i}(t) - \log
f_{j}(t)\big)(f_{i}(t)-f_{j}(t))\rho_i^*\\
&\leq &- \frac{1}{2}\frac{\min\{\rho^{*}_{i}:1\le i\le N\}}{\max\{\rho^{*}_{i}:1\le i\le N\}} \beta\sum_{\substack{\{a_i,a_j\}\in E\\
f_j(t)< f_i(t)}}\big(\log f_{i}(t) - \log
f_{j}(t)\big)(f_{i}(t)-f_{j}(t))(\rho_i^*+\rho_j^*)\\
&=&- \frac{1}{2}\frac{\min\{\rho^{*}_{i}:1\le i\le N\}}{\max\{\rho^{*}_{i}:1\le i\le N\}} \beta\sum_{\{a_i,a_j\}\in E}\big(\log
f_{i}(t) - \log f_{j}(t)\big)(f_{i}(t)-f_{j}(t))(\rho_i^*+\rho_j^*)\\
&=&- \frac{1}{2} \frac{\min\{\rho^{*}_{i}:1\le i\le N\}}{\max\{\rho^{*}_{i}:1\le i\le N\}}\beta  \mathcal{E}(f,\log f)\,,
\end{eqnarray*}
where $\mathcal{E}(\cdot,\cdot)$ is the Dirichlet form of $G=(V,E)$ with respect to
the measure $\rho^{*}$ on $V$.
Now using the definition of the modified log-Sobolev constant (\ref{mod-log}),  we conclude that
\begin{displaymath}
   \frac{\mathrm{d}\mathrm{Ent}(f)}{\mathrm{d}t}\leq - c \mathrm{Ent}(f)\,,
\end{displaymath}
resulting in :
\begin{displaymath}
   \mathrm{Ent}(f(t))\leq \mathrm{Ent}(f(0))e^{-c t}\,,
\end{displaymath}
that is, $H({\bm \rho}(t)|{\bm  \rho}^*)\leq H({\bm \rho}^0|{\bm  \rho}^*)e^{-c t}$
for $t\ge 0$,
where $c=\beta  \gamma_0\frac{\min\{ \rho^{*}_{i}:1\le i\le
N\}}{\max\{ \rho_i^*:1\le i\le N\}}$. This completes the proof of Theorem.
\end{proof}

\section{Talagrand-type Inequalities}

\subsection{Discrete Wasserstein-type metric on $\mathcal{M}$}\label{metric}

Consider a graph $G = (V,E)$ with $V=\{ a_1,a_2,\cdots,a_N\}$. As the
collection of positive probability distributions on $V$, space
$\mathcal{M}$ is defined as in the beginning of Section 2.

\medskip

The tangent space  $T_{{\bm \rho}}\mathcal{M}$ at ${\bm \rho}\in
\mathcal{M}$ has the form
\[ T_{{\bm \rho}}\mathcal{M} = \left\{ {\bm \sigma}=(\sigma_{i})^{N}_{i = 1}\in \mathbb{R}^N | \sum_{i=1}^N
  \sigma_{i} = 0 \right\} \,. \]
It is clear that the standard Euclidean metric on $\mathbb{R}^N$, $d$, is also a Riemannian metric
on $\mathcal{M}$.

Let
\begin{equation*}\label{smooth-phi}
{\bm \Phi}:(\mathcal{M},d)\rightarrow (\mathbb{R}^N,d)
\end{equation*}
be an arbitrary smooth map given by:
$$
{\bm \Phi}({\bm \rho})=(\Phi_i({\bm \rho}))_{i=1}^N, {\qquad \bm
  \rho}\in \mathcal{M} \,.
$$
In the following, we will endow $\mathcal{M}$ with a metric $d_{\bm
\Phi}$, which depends on ${\bm \Phi}$ and the structure of $G$.

\medskip

We consider the function
$$\frac{r_1-r_2}{\log r_1-\log r_2}$$ and  extend it
to the closure of the first quadrant in the plane. Denote
$$e(r_1,r_2)=\begin{cases}
\frac{r_1-r_2}{\log r_1-\log r_2} &\text{ if }r_1\neq r_2 \text{ and }r_1r_2>0\\
0 &\text{ if } r_1r_2=0\\
r_1 &\text{ if } r_1=r_2
\end{cases}.
$$
It is easy to check that $e(r_1,r_2)$ is a continuous function on the
first quadrant and satisfies $\min\{ r_1,r_2\} \le e(r_1,r_2) \le \max\{ r_1,r_2\}$.
For simplicity, we use its original form instead of the
function $e(r_1,r_2)$ in the present paper.


\medskip

The equivalence relation ``$\sim$ " on $\mathbb{R}^N$ is defined as
$${\bm p}\sim{\bm q}\quad  \text{if and only if} \quad
p_1-q_1=p_2-q_2=\cdots=p_N-q_N,$$ and let $\mathcal{W}$ be the
quotient space $\mathbb{R}^N/\sim$. In other words, for ${\bm p}\in \mathbb{R}^N$ we consider its equivalent class
$$[{\bm p}]=\{ (p_1+c,p_2+c,\cdots,p_N+c):c\in \mathbb{R}\}, $$
and all such equivalent classes form the vector space $\mathcal{W}$.

For a given ${\bf \Phi}$, and $[{\bm p}]=[(p_i)_{i=1}^N]\in \mathcal{W}$,
we define an identification $\tau_{\bm \Phi}([{\bm p}])={\bm \sigma}$ from $\mathcal{W}$ to $T_{\bm \rho}\mathcal{M}$ by,
\begin{equation}
   \label{id}
   {\bm \sigma}={\bm p} \mathcal{L}(G, w({\bm \rho})),
\end{equation}
 where
$w({\bm \rho}) = \{w_{ij}({\bm \rho})\}_{i,j = 1}^{N}$
is the weight associated to the original graph $G$:
\begin{displaymath}
   w_{ij}({\bm \rho}) = \left \{ \begin{array}{lll}
\rho_{i}&\mbox{ if } &\Phi_{i}({\bm \rho})>\Phi_{j}({\bm \rho}) , \{a_{i}, a_{j} \} \in E \\
\rho_{j}&\mbox{ if } &\Phi_{i}({\bm \rho})<\Phi_{j}({\bm \rho}) , \{a_{i}, a_{j} \} \in E \\
\frac{\rho_{i} - \rho_{j}}{\log \rho_{i} - \log \rho_{j}}&\mbox{ if }
&\Phi_{i}({\bm \rho})=\Phi_{j}({\bm \rho}) , \{a_{i}, a_{j} \} \in E \\
0 && \mbox{otherwise}
\end{array} \right .
\end{displaymath}
and $\mathcal{L}(G, w({\bm \rho}))$ is the weighted Laplacian matrix of  weighted graph $(G,w({\bm \rho}))$. It is not hard to check that $\tau_{{\bm \Phi}}([ {\bm p}])$ is a
linear isomorphism between $T_{{\bm \rho}} \mathcal{M}$ and
$\mathcal{W}$ (see Lemma 2 in \cite{CHLZ}). Hence ${\bm
\sigma}\in T_{\bm \rho}\mathcal{M}$ can be rewritten as equivalent
classes on $\mathbb{R}^{N}$. For simplicity, we say ${\bm
  \sigma}\simeq [{\bm p}] = [(p_i)_{i=1}^N] $
if $[{\bm p}]:=\tau_{\bm
\Phi}^{-1}({\bm \sigma})\in \mathcal{W} $.

We note that this identification \eqref{id} depends on ${\bm
  \Phi}$, the probability distribution ${\bm \rho}$ and  the structure
of the graph $G$.

\begin{de}\label{de-3.1} By the above identification \eqref{id}, we define an inner product on $T_{\bm \rho}\mathcal{M}$ by:
\begin{eqnarray*}
 g^{\bm \Phi}_{\bm \rho}({\bm\sigma}^{1},{\bm  \sigma}^{2})&=& \sum_{i=1}^{N} p^{1}_{i}\sigma^{2}_{i}=\sum_{i=1}^{N} p^{2}_{i}\sigma^{1}_{i}.
\end{eqnarray*}
\end{de}
It is easy to check that this definition is equivalent to
\begin{equation*}\label{riem-0}
g^{\bm \Phi}_{\bm \rho}({\bm\sigma}^{1},{\bm \sigma}^{2})={\bm p}^1 \mathcal{L}(G, w({\bm \rho})) ({\bm p}^2)^T,
\end{equation*}
for ${\bm \sigma}^{1}=(\sigma_i^1)_{i=1}^N,
{\bm\sigma}^2=(\sigma_i^2)_{i=1}^N \in T_{\rho} \mathcal{M}$, and
$[{\bm p}^1],[{\bm p}^2]\in \mathcal{W}$ satisfying
$${\bm
  \sigma}^{1} \simeq [{\bm p}^1] \text{ and }
{\bm \sigma}^{2} \simeq [{\bm p}^2].$$
In particular,
\begin{equation}\label{riem}
 g^{\bm \Phi}_{\bm \rho}({\bm \sigma},{\bm
   \sigma})={\bm p} \mathcal{L}(G, w({\bm \rho})) {\bm p}^T
\end{equation}
for ${\bm \sigma}\in T_{\rho}\mathcal{M}$, where ${\bm \sigma}\simeq [{\bm p}]$.

The associated distance $d_{{\bm \Phi}}( \cdot, \cdot)$ on
$\mathcal{M}$ is given by
\begin{equation*}\label{geo}
   d_{\bm \Phi}({\bm \rho}^{1},{\bm \rho}^{2}) = \inf_{\gamma} L(\gamma (t))
\end{equation*}
where $\gamma:[0,1]\rightarrow \mathcal{M}$ ranges over all
continuously differentiable curve with $\gamma(0)={\bm \rho}^{1}$,
$\gamma(1)={\bm \rho}^{2}$. The arc length of $\gamma$ is given by
$$L(\gamma(t)) = \int_0^1 \sqrt{ g^{\bm
    \Phi}_{\gamma(t)}(\dot{\gamma}(t),\dot{\gamma}(t))}dt.$$ This
gives the metric space $(\mathcal{M},d_{\bm \Phi})$.

Next, we show that the metric $d_{\bm \Phi}$ is lower bounded.
Given ${\bm \rho}=(\rho_i)_{i=1}^N\in  \mathcal{M}$,
We consider a new identifications
\begin{equation}
   \label{min}
   {\bm \sigma}={\bm p} \mathcal{L}(G, w^m({\bm \rho})),
\end{equation}
 where
$w^m({\bm \rho}) = \{w^m_{ij}({\bm \rho})\}_{i,j = 1}^{N}$
is the weight associated to the original graph $G$:
\begin{displaymath}
   w^m_{ij}({\bm \rho}) = \begin{cases}
\max\{\rho_{i},\rho_j\} &\mbox{ if } \{a_{i}, a_{j} \} \in E \\
0 & \mbox{ otherwise}
\end{cases} .
\end{displaymath}
and $\mathcal{L}(G, w^m({\bm \rho}))$ is the weighted Laplacian matrix of  weighted graph $(G,w^m({\bm \rho}))$.
Similar to the identification \eqref{id}, identifications \eqref{min} is linear isomorphisms between
$T_{\bm \rho}\mathcal{M}$ and $\mathcal{W}$.

Furthermore, they  induce inner product $g_{\bm \rho}^m(\cdot,\cdot)$ on $T_{\bm \rho}\mathcal{M}$.
It is not hard to  see that the map ${\bm \rho}\mapsto g_{\bm \rho}^m$ is smooth. By using the inner products
$g_{\bm \rho}^m$, we can obtain distance $d_m(\cdot,\cdot)$ on $\mathcal{M}$. Then  $(\mathcal{M},d_m)$
 is smooth Riemannian manifold. It is shown in \cite[Lemma
3.4]{CHLZ} that for any smooth map ${\bm \Phi}:(\mathcal{M},d)\rightarrow (\mathbb{R}^N,d)$ and
${\bm \rho}^{1},{\bm \rho}^{2} \in \mathcal{M}$,
 \begin{equation}\label{lower-metric}
   d_{m}({\bm \rho}^{1},{\bm \rho}^{2}) \leq d_{\bm \Phi}({\bm \rho}^{1},{\bm \rho}^{2}) .
\end{equation}

Now we consider two choices
of the function ${\bm \Phi}$  which are related to
Fokker-Planck equation I \eqref{dfpe} and II \eqref{dfpe2} respectively.
Let the potential ${\bm \Psi}=(\Psi_i)_{i=1}^N$ on $V$ be given
and $\beta> 0$, where $\Psi_i$ is the potential on vertex $a_i$.

It then follows from \cite[Section 4]{CHLZ} that Fokker-Planck equation I
\eqref{dfpe} is the gradient flow of free energy $F({\bm \rho})$ on the Riemannian
manifold $(\mathcal{M},d_{\bm \Psi})$, i.e. let
${\bm \Phi}({\bm \rho})\equiv {\bm \Psi}$
where ${\bm \rho}\in \mathcal{M}$. Fokker-Planck equation II
\eqref{dfpe2} is related to inner product $g^{\bar{\bm \Psi}}_{\bm \rho}$, where the new potential
$\bar{\bm \Psi}({\bm \rho})=(\bar{\Psi}_i({\bm \rho}))_{i=1}^N$ is defined by
$$\bar{\Psi}_i({\bm \rho})=\Psi_i+\beta\log \rho_i.$$
Since $g^{\bar{\bm \Psi}}_{\bm \rho}$ is a piecewise smooth function with respect to ${\bm \rho}$, the
space $(\mathcal{M},d_{\bar{\bm \Psi}})$ is a union of finitely many
smooth Riemannian manifolds. Fokker-Planck equation II \eqref{dfpe2}
can also be seen as the generalized gradient flow of $F({\bm \rho})$ on the
metric space $(\mathcal{M},d_{\bar{\bm \Psi}})$ (see \cite[Section 5]{CHLZ}).
By \eqref{lower-metric}, $d_{\bm \Psi}$ and $d_{\bar{\bm \Psi}}$ are lower bounded by $d_m$.

\subsection{Talagrand-type inequalities}

We are now ready to prove  the Talagrand-type inequalities.

\begin{thm}\label{talagrand1}  Let $G=(V,E)$ be a graph with its vertex set $V=\{a_1,a_2,\cdots,a_N\}$ and edge set
$E$. For each ${\bm \mu} = (\mu_{i})_{i=1}^N\in \mathcal{M}$ and  a compact
subset $B$ of $\mathcal{M}$ with respect to the Euclidean metric, there exist a potential function
$\Psi=(\Psi_i)_{i=1}^N$ on $V$ and a constant $K = K(B,{\bm \mu},G)>0$ such that for any ${\bm \nu}=(\nu_{i})_{i=1}^N\in B$, we have  the
following Talagrand-type inequality
\begin{displaymath}
   d^{2}_{\Psi}({\bm \mu},{\bm \nu}) \leq K H({\bm \nu}|{\bm
     \mu}),
\end{displaymath}
where $H({\bm \nu}|{\bm
     \mu})=
     \sum_{i=1}^{N}\nu_{i}\log\frac{\nu_{i}}{\mu_{i}}$.
\end{thm}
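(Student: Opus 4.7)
The plan is to follow the Otto--Villani arc-length strategy \cite{OV}, using the gradient-flow interpretation of Fokker--Planck equation I and the exponential $L^{2}$ decay of Theorem~\ref{thm-1}. The first step is to arrange that $\bm{\mu}$ is the Gibbs distribution for the chosen potential: fix any $\beta>0$ and set $\Psi_{i}:=-\beta\log\mu_{i}$. Then $\bm{\mu}=\bm{\rho}^{*}$ by \eqref{eq-1-3}, and a one-line computation gives $F(\bm{\rho})-F(\bm{\mu})=\beta\,H(\bm{\rho}\mid\bm{\mu})$ for all $\bm{\rho}\in\mathcal{M}$. For any $\bm{\nu}\in B$, let $\bm{\rho}(t)$ be the solution of \eqref{dfpe} starting at $\bm{\nu}$. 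Since this flow is the $g^{\bm{\Psi}}$-gradient flow of $F$, the Riemannian distance is bounded by the arc length of the trajectory,
\begin{equation*}
d_{\bm{\Psi}}(\bm{\mu},\bm{\nu})\;\le\;\int_{0}^{\infty}\sqrt{g^{\bm{\Psi}}_{\bm{\rho}(t)}\bigl(\dot{\bm{\rho}}(t),\dot{\bm{\rho}}(t)\bigr)}\,dt\;=\;\int_{0}^{\infty}\sqrt{-\tfrac{d}{dt}F(\bm{\rho}(t))}\,dt.
\end{equation*}

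The key step is to control the integrand by an $L^{2}$-quantity, uniformly for $\bm{\nu}\in B$. Expanding $-dF/dt$ as an edge sum in the same manner as in \eqref{eq-key}, and then invoking the elementary estimate $(\log a-\log b)^{2}\le C_{0}(a-b)^{2}$ valid on compact subintervals of $(0,\infty)$, one obtains
\begin{equation*}
-\tfrac{d}{dt}F(\bm{\rho}(t))\;\le\;C_{1}\left\|\tfrac{\bm{\rho}(t)}{\bm{\mu}}-1\right\|^{2}_{2,\bm{\mu}}\qquad\text{whenever }\bm{\rho}(t)\in B',
\end{equation*}
for any fixed compact $B'\subset\mathcal{M}$, with $C_{1}=C_{1}(B',\bm{\mu},G)$. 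Setting $\delta:=\min_{\bm{q}\in B}\min_{i}q_{i}>0$ and re-running the $\epsilon_{\ell}$-construction in Claim~\ref{claim_two} with this uniform lower bound, I can produce a single compact set $B'\supset B$ that is forward invariant under the flow from every $\bm{\nu}\in B$, and that simultaneously makes the decay constant in Theorem~\ref{thm-1} uniform: there is $C_{2}=C_{2}(B,\bm{\mu},G)>0$ such that
\begin{equation*}
\left\|\tfrac{\bm{\rho}(t)}{\bm{\mu}}-1\right\|^{2}_{2,\bm{\mu}}\;\le\;\left\|\tfrac{\bm{\nu}}{\bm{\mu}}-1\right\|^{2}_{2,\bm{\mu}}e^{-C_{2}t}
\end{equation*}
for every $\bm{\nu}\in B$ and $t\ge0$. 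Substituting into the previous display and integrating gives
\begin{equation*}
d_{\bm{\Psi}}(\bm{\mu},\bm{\nu})\;\le\;\tfrac{2\sqrt{C_{1}}}{C_{2}}\left\|\tfrac{\bm{\nu}}{\bm{\mu}}-1\right\|_{2,\bm{\mu}}.
\end{equation*}

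Finally, to replace the $L^{2}$ norm by the relative entropy: for $\bm{\nu}\in B$, the ratios $\nu_{i}/\mu_{i}$ lie in a compact subinterval $[a,b]\subset(0,\infty)$ depending only on $B$ and $\bm{\mu}$. On $[a,b]$ the function $h(x):=x\log x-x+1$ is $C^{2}$ with $h(1)=h'(1)=0$ and $h''(1)=1$, hence $h(x)\ge C_{3}(x-1)^{2}$ for some $C_{3}=C_{3}(B,\bm{\mu})>0$; summing against $\mu_{i}$ and using $\sum_{i}\mu_{i}(\nu_{i}/\mu_{i})=1$ yields $\|\bm{\nu}/\bm{\mu}-1\|^{2}_{2,\bm{\mu}}\le(1/C_{3})H(\bm{\nu}\mid\bm{\mu})$. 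Squaring the preceding bound then delivers the desired inequality with $K=4C_{1}/(C_{3}C_{2}^{2})$. The main obstacle is the uniform-in-$\bm{\nu}$ control of the decay rate $C_{2}$: the Remark after Theorem~\ref{thm-1} shows that its constant degenerates as the initial datum approaches $\partial\mathcal{M}$, which is precisely why compactness of $B$ inside $\mathcal{M}$ is indispensable and why the resulting inequality is unavoidably local rather than global.
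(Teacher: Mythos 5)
Your argument is essentially correct and follows the same Otto--Villani philosophy as the paper (same choice $\Psi_i=-\beta\log\mu_i$ so that $\bm{\mu}$ is the Gibbs measure and $F=\beta H(\cdot|\bm{\mu})$, same uniformization of the $\epsilon_\ell$-construction of Claim~\ref{claim_two} over $B$ to make the decay constant of Theorem~\ref{thm-1} uniform), but the way you convert the decay into a distance bound is genuinely different. The paper runs the flow only up to a \emph{finite} time $T$ chosen so that $\|\bm{\rho}(T)-\bm{\mu}\|^2\le\frac14\|\bm{\nu}-\bm{\mu}\|^2$, establishes two-sided comparability $C_2\|\bm{\sigma}\|^2\le g^{\Psi}_{\bm{\rho}}(\bm{\sigma},\bm{\sigma})\le C_1\|\bm{\sigma}\|^2$ on the invariant compact set via the eigenvalues $\lambda_2(\bm{\rho}),\lambda_N(\bm{\rho})$ of $\mathcal{L}(G,w(\bm{\rho}))$, and then bounds $d_\Psi^2(\bm{\mu},\bm{\nu})\le 2d_\Psi^2(\bm{\mu},\bm{\rho}(T))+2d_\Psi^2(\bm{\rho}(T),\bm{\nu})$, handling the first piece by a straight-line path and the second by Cauchy--Schwarz against $F(\bm{\nu})-F(\bm{\rho}(T))\le F(\bm{\nu})$; no comparison between $H$ and the $\chi^2$-type norm is needed. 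You instead integrate $\sqrt{-dF/dt}$ over the whole trajectory, dominate the instantaneous dissipation by $C_1\|\bm{\rho}(t)/\bm{\mu}-1\|^2_{2,\bm{\mu}}$ (valid, since the edge weights are at most $1$ and the logarithmic increments are Lipschitz on the invariant compact set), and then pass from the $L^2$ norm to the entropy via $h(x)=x\log x-x+1\ge C_3(x-1)^2$ on the relevant compact range of ratios; this buys you a cleaner, more quantitative route that avoids the triangle-inequality splitting and most of the spectral machinery. One point you should not gloss over: the opening inequality $d_{\bm{\Psi}}(\bm{\mu},\bm{\nu})\le\int_0^\infty\sqrt{g^{\bm{\Psi}}_{\bm{\rho}(t)}(\dot{\bm{\rho}}(t),\dot{\bm{\rho}}(t))}\,dt$ is not immediate from the definition of $d_{\bm{\Psi}}$ as an infimum over curves on $[0,1]$, because the trajectory only reaches $\bm{\mu}$ in the limit. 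You need $d_{\bm{\Psi}}(\bm{\rho}(T),\bm{\mu})\to0$ as $T\to\infty$, which requires an upper bound $g^{\Psi}_{\bm{\rho}}(\bm{\sigma},\bm{\sigma})\le C\|\bm{\sigma}\|^2$ on a compact neighborhood of $\bm{\mu}$ (i.e.\ a lower bound on $\lambda_2(\bm{\rho})$ there, exactly the $1/\lambda_2(\bm{\rho})$ estimate the paper uses), after which $d_{\bm{\Psi}}(\bm{\rho}(T),\bm{\mu})\le\sqrt{C}\,\|\bm{\rho}(T)-\bm{\mu}\|\to0$ by a straight-line path and the conclusion follows by writing $d_{\bm{\Psi}}(\bm{\mu},\bm{\nu})\le\int_0^T\sqrt{g^{\bm{\Psi}}}\,dt+d_{\bm{\Psi}}(\bm{\rho}(T),\bm{\mu})$ and letting $T\to\infty$. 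With that one-line repair (and the routine verification, as in the paper, that the forward-invariance argument of Claim~\ref{claim_two} applies verbatim to the set $B'$ built from $\delta=\min_{\bm{q}\in B}\min_i q_i$), your proof is complete and yields an explicit constant $K=K(B,\bm{\mu},G)$ of the same nature as the paper's.
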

\begin{proof} Given ${\bm \mu} = (\mu_{i})_{i=1}^N\in \mathcal{M}$ and a compact
subset $B$ of $\mathcal{M}$ with respect to the Euclidean metric. Let $\Psi_{i} = - \log \mu_{i}$ for $i=1,2,\cdots,N$ and $\beta=1$.
Then $$F({\bm \nu})=H({\bm \nu}|{\bm \mu})\ge 0$$
for any ${\bm \nu}\in \mathcal{M}$.
In the following, we are going to show that there is a constant $K = K(B,{\bm \mu},G)>0$
such that\begin{displaymath}
      d^{2}_{\Psi}({\bm \nu},{\bm \mu}) \leq K H({\bm
     \nu}|{\bm \mu})=K F({\bm \nu})
\end{displaymath}
for any ${\bm \nu }\in B$.

Firstly using \eqref{id} and \eqref{riem}, for ${\bm \sigma}\in T_{\bm{\rho}}\mathcal{M}$ we have
\begin{displaymath}
   ||\bm{\sigma}||^{2} = \bm{p}\mathcal{L}(G,w(\bm{\rho})) \mathcal{L}(G,w(\bm{\rho}))^{T}\bm{p}^{T}
\end{displaymath}
and
 \begin{equation*}
 g^{\bm \Phi}_{\bm \rho}({\bm \sigma},{\bm
   \sigma})={\bm p} \mathcal{L}(G, w({\bm \rho})) {\bm p}^T,
\end{equation*}
where ${\bm \sigma}\simeq [{\bm p}]$ and $||\cdot||$ is the standard Euclidean norm on $\mathbb{R}^N$.

Since $\mathcal{L}(G, w(\bm{\rho}))$ is a real symmetric matrix, we
decompose $\mathcal{L}(G, w(\bm{\rho}))$ into
\begin{displaymath}
   \mathcal{L}(G,w(\bm{\rho})) = Q\Lambda Q^{T}
\end{displaymath}
where $\Lambda$ is a diagonal matrix whose diagonal entries are
eigenvalues of $\mathcal{L}(G,w(\bm{\rho}))$, and $Q$ is a real orthogonal matrix.

Let $\bm{w} =\bm{p}Q$, then we have
\begin{equation}\label{estimate-1}
\begin{split}
    &g^{\Psi}_{\bm \rho}(\bm{\sigma},\bm{\sigma}) = \bm{p}\mathcal{L}(G, w({\bm \rho}))  \bm{p}^{T} =
    \bm{w}\Lambda \bm{w}^{T} \text{ and }\\
   &||\bm{\sigma}||^{2} =  \bm{p}\mathcal{L}(G, w({\bm \rho})) \mathcal{L}(G, w({\bm \rho})) ^{T} \bm{p}^{T} =
   \bm{w}\Lambda^{2}\bm{w}^{T} \,.
\end{split}
\end{equation}
Denote $\lambda_{2}(\bm{\rho})$ and $\lambda_{N}(\bm{\rho})$ the second smallest eigenvalue
and largest eigenvalue of $\mathcal{L}(G, w(\bm{\rho}))$ respectively. Since $\mathcal{L}(G, w({\bm \rho}))$  has one $0$ eigenvalue and $N-1$ positive eigenvalues, it is not hard to see
\begin{displaymath}
    \frac{1}{\lambda_{N}({\bm \rho})}||{\bm \sigma}||^{2} \leq g^{\Psi}_{{\bm \rho}}({\bm \sigma},{\bm \sigma})\leq \frac{1}{\lambda_{2}({\bm \rho})} ||{\bm \sigma}||^{2}\,
\end{displaymath}
by \eqref{estimate-1}.

Let
us denote $$M=\max\{ e^{2|\Psi_i|}:i=1,2,\cdots,N\},$$
$$\epsilon_0=1,$$   and
$$\epsilon_1=\frac{1}{2} \min\left\{ \frac{\epsilon_0}{(1+(2M)^{\frac{1}{\beta}})},
\min_{(\rho_i)_{i=1}^N\in B}\min\{\rho_i: i=1,\cdots,N\}\right\},$$
where $\epsilon_1>0$ as $B$ is compact. For $\ell=2,3,\cdots,N-1$,
we let
$$\epsilon_\ell=\frac{\epsilon_{\ell-1}}{1+(2M)^{\frac{1}{\beta}}}.$$
We define
\begin{align*}
D=\{ &{\bm q}=(q_i)_{i=1}^N\in \mathcal{M}:\sum_{r=1}^\ell q_{i_r}\le 1-\epsilon_{\ell} \text{ where }\ell\in \{1,\cdots,N-1\},\\
&1\le i_1<\cdots <i_\ell \le N \}.
\end{align*}
Then $D$ is a compact subset of $\mathcal{M}$ with respect to  the Euclidean
metric and  with
\begin{align*}
\text{int}(D)=\{ &{\bm q}=(q_i)_{i=1}^N\in \mathcal{M}:\sum_{r=1}^\ell q_{i_r}<1-\epsilon_{\ell},\text{ where }\ell\in \{1,\cdots,N-1\},\\
 &1\le i_1<\cdots <i_\ell \le N \}.
\end{align*}
and $B\subset \text{int}(D)$.

Let
\begin{align*}
   C_{1} = \max_{{\bm \rho}\in D} \{\frac{1}{\lambda_{2}({\bm \rho})}\} \text{ and }
   C_{2} = \min_{{\bm \rho}\in D} \{\frac{1}{\lambda_{N}({\bm \rho})}\},
\end{align*}
Since $\lambda_2,\lambda_N: \mathcal{M}\mapsto (0,+\infty)$ are continuous and $D$ is compact with respect to the Euclidean metric on $\mathcal{M}$,  we have $0< C_{2} \leq C_{1} < +\infty$. It is clear that  $C_{1}$, $C_{2}$ depend only on $B$, ${\bm \mu}$ and $G$, and
\begin{equation}\label{g-bound-1}
   C_2||{\bm \sigma}||^{2} \leq g^{\Psi}_{{\bm \rho}}({\bm \sigma},{\bm \sigma})\leq C_1 ||{\bm \sigma}||^{2}
\end{equation}
for any ${\bm \rho}\in D, {\bm \sigma}\in T_{\bm \rho}\mathcal{M}$.

Now for ${\bm \nu}\in B$, let ${\bm \rho}(t)=(\rho_i(t))_{i=1}^N:[0,+\infty)\rightarrow \mathcal{M}$ is the solution of the Fokker-Planck Equation  \eqref{dfpe} for $\beta = 1$:
\begin{align*}
\begin{aligned}
\label{T1}
 \frac{d \rho_{i}} {dt}=&\sum_{j\in N(i),\Psi_{j}>\Psi_{i}}
 ((\Psi_{j}+\log\rho_{j})-(\Psi_{i}+\log\rho_{i}))\rho_{j}\\
 &+\sum_{j\in N(i),\Psi_{j}<\Psi_{i}} ((\Psi_{j}+\log\rho_{j})-(\Psi_{i}+\log\rho_{i}))\rho_{i}  \\
&+\sum_{j\in N(i), \Psi_{j}=\Psi_{i}}(\rho_{j}-\rho_{i})\,
\end{aligned}
\end{align*}
with initial value ${\bm \nu}$, that is, ${\bm \rho}(0)={\bm \nu}$. Since ${\bm \nu}\in \text{int}(D)$,
we have ${\bm \rho}(t)\in  D$ for all $t\ge 0$, which's proof is similar to the proof of Claim 3.3.

Since the Gibbs distribution given by
\eqref{eq-1-3} is ${\bm \mu}$, Theorem \ref{thm-1} and \eqref{def-C} imply that there exists a constant $C
= C({\bm \mu}, B,G)>0$ such that
\begin{equation*}
\sum_{i=1}^N\frac{(\rho_i(t)-\mu_i)^2}{\mu_i}\le
(\sum_{i=1}^N\frac{(\nu_{i}-\mu_i)^2}{\mu_i})e^{-Ct} \text{
for all }t\ge 0,
\end{equation*}
Moreover let $m = \min \{\mu_i:1\le i\le N\}$ and $M = \max \{\mu_{i}:1\le i\le N\}$, then
\begin{displaymath}
   ||{\bm \rho}(t) - {\bm \mu}||^{2} \leq \frac{M}{m}||{\bm \mu} - {\bm \nu}||^2e^{-Ct}\text{
for all }t\ge 0.
\end{displaymath}
Set $T = \frac{1}{C}\log (\frac{4M}{m})$.
One obtains
\begin{displaymath}
   ||{\bm \rho}(T) - {\bm \mu}||^{2} \leq \frac{1}{4}||{\bm \mu} - {\bm
     \nu}||^{2} \leq \frac{1}{2}(||{\bm \mu} - {\bm \rho}(T)||^{2} +
   ||{\bm \rho}(T) - {\bm \nu}||^{2})\,,
\end{displaymath}
which implies
\begin{displaymath}
   ||{\bm \rho}(T) - {\bm \nu}||^{2} \geq ||{\bm \rho}(T) - {\bm \mu}||^{2}\,.
\end{displaymath}
So after time $T$, the solution of equation \eqref{dfpe} traveled at least half
of the Euclidean distance from ${\bm \nu}$ to ${\bm \mu}$.

Moreover since the Fokker-Planck equation equation \eqref{dfpe}  is the gradient flow of free
energy $F$ under the metric $d_{\Psi}(\cdot,\cdot)$ (see Equation (31) and Theorem 2 in \cite{CHLZ}), we have
\begin{align*}
   \frac{\mathrm{d}F({\bm \rho}(t))}{\mathrm{d}t} =-g^{\Psi}_{{\bm \rho}(t)}(\dot{\bm \rho}(t),\dot{\bm \rho}(t))
\end{align*}
for $t>0$.
By Integrating the previous equality from $0$ to $T$, we have
\begin{eqnarray*}
\begin{aligned}
 F({\bm \nu}) - F({\bm \rho}(T))& = \int_{0}^{T} g^{\Psi}_{{\bm \rho}(t)}(\dot{{\bm \rho}}(t),\dot{{\bm \rho}}(t)) \mathrm{d}t\geq \frac{1}{T}(\int_{0}^{T}\sqrt{ g^{\Psi}_{{\bm \rho}(t)}(\dot{\bm \rho}(t),\dot{\bm \rho}(t))}\mathrm{d}t)^{2}\\
&\geq \frac{1}{T}(\int_{0}^{T}\sqrt{ C_2} ||\dot{\bm \rho}(t)|| \mathrm{d}t)^{2}\geq \frac{C_{2}}{T}||{\bm \nu} - {\bm \rho}(T)||^{2}.
\end{aligned}
\end{eqnarray*}
the last second inequality comes from \eqref{g-bound-1} and the fact that ${\bm \rho}(t)\in D$.
At the same time,
\begin{eqnarray*}
\begin{aligned}
 F({\bm \nu}) - F({\bm \rho}(T))& = \int_{0}^{T} g^{\Psi}_{{\bm \rho}(t)}(\dot{{\bm \rho}}(t),\dot{{\bm \rho}}(t)) \mathrm{d}t\\
&\geq \frac{1}{T}(\int_{0}^{T}\sqrt{ g^{\Psi}_{{\bm \rho}(t)}(\dot{\bm \rho}(t),\dot{\bm \rho}(t))}\mathrm{d}t)^{2}\\
&\geq \frac{1}{T}d_{\Psi}^{2}({\bm \nu},{\bm \rho}(T)).
\end{aligned}
\end{eqnarray*}

Let ${\bm s}(t)=t{\bm \rho}(T)+(1-t){\bm \mu}$ for $t\in [0,1]$. Since $D$ is a convex subset of $\mathbb{R}^N$ and ${\bm \rho}(T),{\bm \mu}\in D$. we have ${\bm s}(t)\in D$ for $t\in [0,1]$. Thus
\begin{align*}
   d^{2}_{\Psi}({\bm \rho}(T),{\bm \mu})
   &\leq (\int_0^1 \sqrt{g^{\Psi}_{{\bm s}(t)}({\bm \rho}(T)-{\bm \mu},{\bm \rho}(T)-{\bm \mu} )}\mathrm{d}t)^2 \\
   &\leq (\int_0^1 \sqrt{C_{1}||{\bm \rho}(T) - {\bm \mu}||^{2}}\mathrm{d}t)^2\\
   &=C_{1}||{\bm \rho}(T) - {\bm \mu}||^{2}
\end{align*}
the last second inequality comes from \eqref{g-bound-1} and the fact that ${\bm s}(t)\in D$.

This gives us the bounds
\begin{eqnarray*}
d^{2}_{\Psi}({\bm \rho}(T),{\bm \mu}) &\leq & C_{1}||{\bm \rho}(T) - {\bm
  \mu}||^{2} \leq C_{1}||{\bm \rho}(T) - {\bm \nu}||^{2}\\
&\leq&\frac{TC_{1}}{C_{2}}(F({\bm \nu}) - F({\bm \rho}(T)))\leq\frac{TC_{1}}{C_{2}}F({\bm \nu})\,,
\end{eqnarray*}
and
\begin{displaymath}
   d_{\Psi}^{2}({\bm \nu},{\bm \rho}(T))\leq T(F({\bm \nu}) -
   F({\bm \rho}(T)))\leq T F({\bm \nu})\,.
\end{displaymath}

In conclusion,
\begin{eqnarray*}
d_{\Psi}^{2}({\bm \mu},{\bm \nu}) &\leq & 2d_{\Psi}^{2}({\bm
  \mu},{\bm \rho}(T)) + 2d_{\Psi}^{2}({\bm \rho}(T),{\bm \nu}) \\
&\leq&(\frac{2TC_{1}}{C_{2}} + 2T)F({\bm \nu}) =
K H({\bm \nu}|{\bm \mu})\,,
\end{eqnarray*}
where $K = (\frac{2TC_{1}}{C_{2}} + 2T)$ is a parameter which only depends on
$B$, $G$ and ${\bm \mu}$.
\end{proof}

The other Talagrand-type inequality is for the ``lower bound'' metric
$d_{m}(\cdot,\cdot)$.

\begin{thm}\label{talagrand2} Let $G=(V,E)$ be a graph with its vertex set $V=\{a_1,a_2,\cdots,a_N\}$ and edge set
$E$. Let $D$ be the maximal degree of $G$ and $\lambda_2$ be the spectral gap of $G$. Given ${\bm \mu} = (\mu_{i})_{i=1}^N\in \mathcal{M}$.  Let $m=\min\{\mu_i:1\le i\le N\}$ and $M=\max\{\mu_i:1\le i\le N\}$. Then for any ${\bm \nu}=(\nu_{i})_{i=1}^N\in \mathcal{M}$, we have the
following Talagrand-type inequality
\begin{displaymath}
   d_{m}^2({\bm \nu},{\bm \mu}) \leq K H({\bm \nu}|{\bm \mu})
\end{displaymath}
where  $K=\frac{M(DN^3+4)}{2\lambda_2 m}\log (\frac{18M}{m^3})$ and $d_{m}(\cdot,\cdot)$ is the lower bounded metric defined in Section \ref{metric}.
\end{thm}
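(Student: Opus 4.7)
The plan is to adapt the two-phase strategy used in Theorem \ref{talagrand1} but to obtain a \emph{global} bound by exploiting the inequality $d_m \le d_{\bar{\bm \Psi}}$ from \eqref{lower-metric}, together with the exponential $L^2$ convergence of Theorem~\ref{thm-2}. As in the proof of Theorem~\ref{talagrand1}, I would set $\Psi_i=-\log\mu_i$ and $\beta=1$, so that $\bm\mu$ is the Gibbs distribution of \eqref{eq-1-3} and the free energy becomes $F(\bm\nu)=H(\bm\nu|\bm\mu)\ge 0$. Let $\bm\rho(t)$ be the solution of Fokker-Planck equation II \eqref{dfpe2} with $\bm\rho(0)=\bm\nu$. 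The idea is to use the gradient-flow structure of \eqref{dfpe2} to move from $\bm\nu$ to a point $\bm\rho(T)$ that is Euclidean-close to $\bm\mu$, and then connect $\bm\rho(T)$ to $\bm\mu$ by a short straight-line path.

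For the first phase, since \eqref{dfpe2} is a generalized gradient flow of $F$ in $(\mathcal{M},d_{\bar{\bm\Psi}})$ and $d_m \le d_{\bar{\bm\Psi}}$, the pointwise inequality $g^m_{\bm\rho} \le g^{\bar{\bm\Psi}}_{\bm\rho}$ on tangent vectors holds (because $w^m_{ij}=\max(\rho_i,\rho_j)$ dominates $w^{\bar{\bm\Psi}}_{ij}$ edgewise, so the weighted Laplacians are ordered and the pseudoinverses are reversed on $\bm 1^\perp$). Hence, by Cauchy–Schwarz applied to the length functional,
\begin{equation*}
d_m^2(\bm\nu,\bm\rho(T)) \le T\!\int_0^T g^{\bar{\bm\Psi}}_{\bm\rho(t)}(\dot{\bm\rho},\dot{\bm\rho})\,dt
= T\bigl(F(\bm\nu)-F(\bm\rho(T))\bigr) \le T\,H(\bm\nu|\bm\mu).
\end{equation*}

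For the second phase, I would choose $T_0$ so large that $\bm\rho(T_0)$ is forced into a neighborhood of $\bm\mu$ in which all coordinates are at least $m/2$. Theorem~\ref{thm-2} gives $\chi^2(\bm\rho(t),\bm\mu)\le \chi^2(\bm\nu,\bm\mu)e^{-Ct}$ with $C=\lambda_2 m/M$, and the universal estimate $\chi^2(\bm\nu,\bm\mu)\le 1/m$ together with $\|\bm\rho(t)-\bm\mu\|^2 \le M\chi^2$ show that $T_0=\frac{1}{C}\log(18M/m^3)$ achieves $\|\bm\rho(T_0)-\bm\mu\|^2 \le m^2/18$, hence $\rho_i(T_0)\ge m/2$ for every $i$. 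On the line segment $\bm s(t)=(1-t)\bm\mu+t\bm\rho(T_0)$ we then have $w^m_{ij}(\bm s(t))\ge m/2$ on every edge, so $\lambda_2(\mathcal{L}(G,w^m(\bm s(t))))\ge \tfrac{m}{2}\lambda_2(G)$ by Rayleigh-quotient comparison with the unweighted Laplacian, giving $g^m_{\bm s(t)}(\bm\sigma,\bm\sigma)\le \tfrac{2}{m\lambda_2(G)}\|\bm\sigma\|^2$. Integrating along the straight line bounds $d_m^2(\bm\rho(T_0),\bm\mu)$ by a multiple of $\|\bm\rho(T_0)-\bm\mu\|^2$.

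To make this bound proportional to $H(\bm\nu|\bm\mu)$, the Csiszar-Kullback-Pinsker-type inequality $\|\bm\nu-\bm\mu\|^2 \le 2H(\bm\nu|\bm\mu)$ yields $\chi^2(\bm\nu,\bm\mu)\le (2/m)H$, so by Theorem~\ref{thm-2} again $\|\bm\rho(T_0)-\bm\mu\|^2 \le (2M/m)H\,e^{-CT_0}$, and the choice of $T_0$ produces an $H$-proportional bound on the straight-line piece. Finally I would combine the two estimates via $d_m^2(\bm\nu,\bm\mu)\le 2d_m^2(\bm\nu,\bm\rho(T_0))+2d_m^2(\bm\rho(T_0),\bm\mu)$ and collect constants; the combinatorial factor $DN^3+4$ in the stated $K$ should emerge from tracking a (slightly lossy) bound relating the weighted Laplacian spectrum to $\lambda_2(G)$ and the degree $D$ on the straight-line piece. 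The main obstacle is handling $\bm\nu$ arbitrarily close to the boundary of $\mathcal{M}$: the chi-square can be as large as $1/m-1$, so the decay must be used twice — once with the universal bound to reach a region where the coordinates are uniformly bounded below, and once with the entropic bound $\chi^2\le (2/m)H$ to obtain the proportional estimate.
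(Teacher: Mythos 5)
Your proposal is correct, and its skeleton is the paper's: set $\Psi_i=-\log\mu_i$, $\beta=1$, run Fokker--Planck II from $\bm\nu$ for a time $T$ of the same form $\frac{M}{\lambda_2 m}\log\frac{18M}{m^3}$, bound the flow leg by $T\,H(\bm\nu|\bm\mu)$ via the generalized gradient-flow identity, Cauchy--Schwarz and $d_m\le d_{\bar{\bm\Psi}}$, bound the straight-line leg by a uniform upper bound on $g^m$ near $\bm\mu$, and combine with $d_m^2(\bm\nu,\bm\mu)\le 2d_m^2(\bm\nu,\bm\rho(T))+2d_m^2(\bm\rho(T),\bm\mu)$. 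Where you genuinely diverge is in how the straight-line leg becomes proportional to $H$: the paper never invokes Pinsker; instead it deduces $\|\bm\rho(T)-\bm\mu\|^2\le\frac{m^2}{8}\|\bm\rho(T)-\bm\nu\|^2$ from the $L^2$-decay plus the triangle inequality, and then controls $\|\bm\rho(T)-\bm\nu\|^2$ by $\frac{T}{C_2}\bigl(F(\bm\nu)-F(\bm\rho(T))\bigr)$ using a \emph{global} lower bound $g^{\bar{\bm\Psi}}\ge g^m\ge C_2\|\cdot\|^2$ with $C_2\ge 1/N$ (from $\lambda_N(\bm\rho)\le N$); you instead use $\|\bm\nu-\bm\mu\|^2\le 2H$ and the $\chi^2$-decay of Theorem \ref{thm-2} to get $\|\bm\rho(T)-\bm\mu\|^2\le\frac{m^2}{9}H$ directly, using the decay twice (once with the crude bound $\chi^2\le 1/m$ to force $\rho_i(T)\ge m/2$, once with the entropic bound), which is a valid alternative. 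You also replace the paper's isoperimetric-number estimate from \cite{BZ} (which is where $D$ and $N^2$ enter, giving $\lambda_2(\mathcal{L}(G,w^m(\bm\rho)))\ge \frac{2(\min_i\rho_i)^2}{DN^2}$) by the cleaner Rayleigh comparison $\lambda_2(\mathcal{L}(G,w^m(\bm s(t))))\ge\frac{m}{2}\lambda_2(G)$ on the segment; this yields the sharper constant $2T+\frac{4m}{9\lambda_2}$ for the sum of the two legs, and since $DN^3+4\ge 12$ one checks easily that $2T+\frac{4m}{9\lambda_2}\le 6T\le K$, so the stated inequality follows a fortiori. The only loose end is your last sentence: in your argument the factor $DN^3+4$ never arises (it is an artifact of the paper's route), so rather than expecting it to ``emerge,'' you should simply verify, as above, that your constant is dominated by the stated $K$.
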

\begin{proof}
Let $\beta=1$, $\Psi_{i} = - \log \mu_{i}$ and $\bar{\Psi}_{i}(\bm
\rho) = -\log \mu_{i} + \log\rho_{i}$ for $i=1,2,\cdots,N$. Then
$F({\bm \rho})=H({\bm
\rho}|{\bm \mu})$ for ${\bm \rho}\in \mathcal{M}$.
In the following, we are going to show that there is a constant $K = K({\bm \mu},G)>0$
such that\begin{displaymath}
      d^{2}_{m}({\bm \nu},{\bm \mu}) \leq K H({\bm
     \nu}|{\bm \mu})=K F({\bm \nu})
\end{displaymath}
for any ${\bm \nu }\in B$.

Now for ${\bm \nu}\in B$, let ${\bm \rho}(t)=(\rho_i(t))_{i=1}^N:[0,+\infty)\rightarrow \mathcal{M}$ is the solution of the Fokker-Planck equation (II) \eqref{dfpe2} with $\beta = 1$,
\begin{align*}
\begin{aligned}
\label{T2}
 \frac{d \rho_{i}} {dt}=&\sum_{j\in N(i),\bar{\Psi}_{j}>\bar{\Psi}_{i}} ((\Psi_{j}+\log\rho_{j})-(\Psi_{i}+\log\rho_{i}))\rho_{j}\\
 &+\sum_{j\in N(i),\bar{\Psi}_{j}<\bar{\Psi}_{i}}
 ((\Psi_{j}+\log\rho_{j})-(\Psi_{i}+\log\rho_{i}))\rho_{i}\,.
\end{aligned}
\end{align*}
with initial value ${\bm \nu}$, that is, ${\bm \rho}(0)={\bm \nu}$.

Let $m = \min\{\mu_{i}:1\le i\le N\}$, $M = \max\{\mu_{i}:1\le i\le N\}$.  Since the Gibbs distribution given by
\eqref{eq-1-3} is ${\bm \mu}$, Theorem \ref{thm-2} implies
\begin{equation*}
\sum_{i=1}^N\frac{(\rho_i(t)-\mu_i)^2}{\mu_i}\le
(\sum_{i=1}^N\frac{(\nu_{i}-\mu_i)^2}{\mu_i})e^{-\lambda_2 \frac{m}{M}t} \text{
for all }t\ge 0,
\end{equation*}
where $\lam_2$ is the spectral gap of $G$. Moreover,
\begin{displaymath}
   ||{\bm \rho}(t) - {\bm \mu}||^{2} \leq \frac{M}{m}||{\bm \mu} - {\bm \nu}||^2e^{-\lambda_2 \frac{m}{M}t}\text{
for all }t\ge 0.
\end{displaymath}
Set $T = \frac{M}{\lambda_2 m}\log (\frac{18M}{m^3})$.
One obtains
\begin{displaymath}
   ||{\bm \rho}(T) - {\bm \mu}||^{2} \leq \frac{m^2}{18}||{\bm \mu} - {\bm
     \nu}||^{2} \leq \frac{m^2}{9}(||{\bm \mu} - {\bm \rho}(T)||^{2} +
   ||{\bm \rho}(T) - {\bm \nu}||^{2})\,,
\end{displaymath}
which implies
\begin{displaymath}
  ||{\bm \rho}(T) - {\bm \mu}||^{2}\leq \frac{1}{8}m^2||{\bm \rho}(T) - {\bm \nu}||^{2}\le \frac{m^2}{4}
\end{displaymath}
as $m\le \frac{1}{N}<1$. Thus ${\bm \rho}(T)\in N({\bm \mu})$, where
$$N({\bm \mu})=\{{\bm \rho}=(\rho_i)_{i=1}^N \in \mathcal{M} : \ \  |\rho_{i} - \mu_{i}|\le
   \frac{m}{2}\,,  \text{ for }i=1,2,\cdots,N\}$$
is a compact convex subset of $\mathcal{M}$ with respect to Euclidean metric.
In other words, after time $T$, the solution of \eqref{dfpe2} travels at least half
of the distance from ${\bm \nu}$ to ${\bm \mu}$ and enters into the
neighborhood $N({\bm \mu})$.
Then we can use the exactly same
method as in Theorem \ref{talagrand1} to estimate $d^{2}_{m}(\cdot,\cdot)$.

Denote $\lambda_{2}(\bm{\rho})$ and $\lambda_{N}(\bm{\rho})$ the second smallest eigenvalue
and largest eigenvalue of $\mathcal{L}(G, w^m(\bm{\rho}))$ respectively. Similar to the proof of Theorem \ref{talagrand1}, we can prove
\begin{equation}\label{estimate-2-m}
    \frac{1}{\lambda_{N}({\bm \rho})}||{\bm \sigma}||^{2} \leq g^{m}_{{\bm \rho}}({\bm \sigma},{\bm \sigma})\leq \frac{1}{\lambda_{2}({\bm \rho})} ||{\bm \sigma}||^{2}\,
\end{equation}
for any ${\bm \sigma}\in T_{\bm \rho}\mathcal{M}$.

For ${\bm \rho}\in \mathcal{M}$, let $\bar{\delta}({\bm \rho})$ be the maximal of the diagonal elements in the
Laplacian matrix $\mathcal{L}(G, w^m(\bm{\rho}))$ and let
\begin{displaymath}
   i_{w^m(\bm{\rho})}(G) = \min_{X\subset V, |X| \leq N/2} (\sum_{i\in X, j\not\in X}w^m_{ij}({\bm \rho})/|X|)\,.
\end{displaymath}
where the minimum is taken over all nonempty
subsets $X$ of $V$ satisfying $|X|\le \frac{N}{2}$. We shall refer to $i_{w^m(\bm{\rho})}(G)$ as {\it the isoperimetric
number} of the weighted graph $(G, w^m(\bm{\rho}))$. Since $G$ is connected and $w^m_{ij}({\bm \rho})\ge \min \{ \rho_i:1\le i\le N\}$ for $\{a_,a_j\}\in E$,
it is not hard to see that
\begin{equation}\label{iso-number}
i_{w^m(\bm{\rho})}(G)\ge \frac{2\min \{ \rho_i:1\le i\le N\}}{N}.
\end{equation}
It follow from
Theorem 2.2 in \cite{BZ} that the spectral gap $\lambda_2({\bm \rho})$ of the weighted graph $(G, w^m(\bm{\rho}))$ satisfies
\begin{displaymath}
   \lambda_{2}({\bm \rho})\geq \bar{\delta}({\bm \rho}) - \sqrt{\bar{\delta}({\bm \rho})^{2} - i_{w^m({\bm \rho})}(G)^{2}}\,,
\end{displaymath}
It then follows from inequality
\begin{displaymath}
   \bar{\delta}({\bm \rho}) - \sqrt{\bar{\delta}({\bm \rho})^{2} - i_{w^m({\bm \rho})}(G)^{2}} \geq \frac{i_{w^m({\bm \rho})}(G)^{2}}{2\bar{\delta}({\bm \rho})}\,
\end{displaymath}
that
\begin{equation}\label{iso-number1}
   \lambda_{2}({\bm \rho})\geq \frac{2(\min \{ \rho_i:1\le i\le N\})^{2}}{DN^{2}}
\end{equation}
by \eqref{iso-number} and the fact that $\bar{\delta}({\bm \rho})\le D$.

Let $C_1=\max \{\frac{1}{\lambda_2({\bm \rho})} : {\bm \rho}\in N({\bm \mu})\}$. Note that $\min \{ \rho_i:1\le i\le N\}\ge  \frac{m}{2}$ for all
${\bm \rho}\in N({\bm \mu})$. We have $C_{1}\le \frac{2DN^2}{m^2}$ by \eqref{iso-number1}.
It is well known that $\lambda_{N}({\bm \rho})\leq N$ for ${\bm \rho}\in \mathcal{M}$ (see for example \cite{OR}).
Let $C_2=\inf \{\frac{1}{\lambda_{N}({\bm \rho})} : {\bm \rho}\in \mathcal{M}\}$. Then $C_2\ge \frac{1}{N}$. Now by \eqref{estimate-2-m}, we have
\begin{equation}\label{dm-lower-b}
   C_2||\bm{\sigma}||^{2}\le g^{m}_{\bm \rho}(\bm{\sigma},\bm{\sigma})
\end{equation}
for all $\bm{\sigma}\in T_{\bm \rho}\mathcal{M}\, , {\bm \rho}\in \mathcal{M}$
and
\begin{equation}\label{dm-upper-b}
    g^{m}_{\bm \rho}(\bm{\sigma},\bm{\sigma})\leq C_1||\bm{\sigma}||^{2}
\end{equation}
for all $\bm{\sigma}\in T_{\bm \rho}\mathcal{M}\, , {\bm \rho}\in N({\bm \mu})$.

Moreover since the Fokker-Planck equation equation \eqref{dfpe2}  is the generalized gradient flow of free
energy $F$ under the metric $d_{\bar{\Psi}}(\cdot,\cdot)$ (see Equation (45) and Theorem 3 in \cite{CHLZ}), we have
\begin{align*}
   \frac{\mathrm{d}F({\bm \rho}(t))}{\mathrm{d}t} =-g^{{\bar \Psi}}_{{\bm \rho}(t)}(\dot{\bm \rho}(t),\dot{\bm \rho}(t))
\end{align*}
for $t>0$.
By integrating the previous equality from $0$ to $T$, we have
\begin{eqnarray*}
\begin{aligned}
 F({\bm \nu}) - F({\bm \rho}(T))& = \int_{0}^{T} g^{{\bar \Psi}}_{{\bm \rho}(t)}(\dot{{\bm \rho}}(t),\dot{{\bm \rho}}(t)) \mathrm{d}t\geq \frac{1}{T}(\int_{0}^{T}\sqrt{ g^{{\bar \Psi}}_{{\bm \rho}(t)}(\dot{\bm \rho}(t),\dot{\bm \rho}(t))}\mathrm{d}t)^{2}\\
&\geq \frac{1}{T}(\int_{0}^{T}\sqrt{ C_2} ||\dot{\bm \rho}(t)|| \mathrm{d}t)^{2}\geq \frac{C_{2}}{T}||{\bm \nu} - {\bm \rho}(T)||^{2}
\end{aligned}
\end{eqnarray*}
the last second inequality comes from \eqref{dm-lower-b}.
At the same time,
\begin{eqnarray*}
\begin{aligned}
 F({\bm \nu}) - F({\bm \rho}(T))& = \int_{0}^{T} g^{\bar \Psi}_{{\bm \rho}(t)}(\dot{{\bm \rho}}(t),\dot{{\bm \rho}}(t)) \mathrm{d}t\\
&\geq \frac{1}{T}(\int_{0}^{T}\sqrt{ g^{\bar \Psi}_{{\bm \rho}(t)}(\dot{\bm \rho}(t),\dot{\bm \rho}(t))}\mathrm{d}t)^{2}\\
&\geq \frac{1}{T}d_{\bar \Psi}^{2}({\bm \nu},{\bm \rho}(T))\geq \frac{1}{T}d_m^{2}({\bm \nu},{\bm \rho}(T))
\end{aligned}
\end{eqnarray*}
the last inequality comes from \eqref{lower-metric}.

Let ${\bm s}(t)=t{\bm \rho}(T)+(1-t){\bm \mu}$ for $t\in [0,1]$. Since $N({\bm \mu})$ is a convex subset of $\mathbb{R}^N$ and ${\bm \rho}(T),{\bm \mu}\in N({\bm \mu})$. we have ${\bm s}(t)\in N({\bm \mu})$ for $t\in [0,1]$. Thus
\begin{align*}
   d^{2}_{m}({\bm \rho}(T),{\bm \mu})
   &\leq (\int_0^1 \sqrt{g^{m}_{{\bm s}(t)}({\bm \rho}(T)-{\bm \mu},{\bm \rho}(T)-{\bm \mu} )}\mathrm{d}t)^2 \\
   &\leq (\int_0^1 \sqrt{C_{1}||{\bm \rho}(T) - {\bm \mu}||^{2}}\mathrm{d}t)^2\\
   &=C_{1}||{\bm \rho}(T) - {\bm \mu}||^{2}
\end{align*}
the last second inequality comes from \eqref{dm-upper-b} and the fact that ${\bm s}(t)\in N({\bm \mu})$.

This gives us the bounds
\begin{eqnarray*}
d^{2}_{m}({\bm \rho}(T),{\bm \mu}) &\leq & C_{1}||{\bm \rho}(T) - {\bm
  \mu}||^{2} \leq \frac{m^2}{8}C_{1}||{\bm \rho}(T) - {\bm \nu}||^{2}\\
&\leq&\frac{m^2TC_{1}}{8C_{2}}(F({\bm \nu}) - F({\bm \rho}(T)))\leq\frac{m^2TC_{1}}{8C_{2}}F({\bm \nu})\,,
\end{eqnarray*}
and
\begin{displaymath}
   d_{m}^{2}({\bm \nu},{\bm \rho}(T))\leq T(F({\bm \nu}) -
   F({\bm \rho}(T)))\leq T F({\bm \nu})\,.
\end{displaymath}

Finally, note that $C_1\le \frac{2DN^2}{m^2}$ and $C_2\ge \frac{1}{N}$. We have
\begin{eqnarray*}
d_{m}^{2}({\bm \nu},{\bm \mu}) &\leq & 2d_{m}^{2}({\bm \nu},{\bm \rho}(T))+2d_{m}^{2}({\bm \rho}(T),{\bm
  \mu}) \\
&\leq&(\frac{m^2TC_{1}}{4C_{2}} + 2T)F({\bm \nu})=T(\frac{m^2C_{1}}{4C_{2}} + 2)F({\bm \nu})\\
&\leq& \frac{M}{\lambda_2 m}\log (\frac{18M}{m^3})\big( \frac{m^2 \frac{2DN^2}{m^2}}{4\frac{1}{N}} + 2\big)F({\bm \nu})\\
&=&
K H({\bm \nu}|{\bm \mu})\,,
\end{eqnarray*}
where $K = \frac{M(DN^3+4)}{2\lambda_2 m}\log (\frac{18M}{m^3})$. This completes the proof.
\end{proof}

\end{document}